\newcommand{\ci}[1]{\mathscr{#1}}
\newcommand{\g}[1]{\mathfrak{#1}}
\renewcommand{\ni}{\nu}
\newcommand{\alfa}{\alpha}
\newcommand{\R}{\mathbf{R}}
\newcommand{\C}{\mathbf{C}}
\newcommand{\per}{\cdot}
\newcommand{\D}{\Delta}
\newcommand{\de}{\partial}
\newcommand{\grad}{\nabla}
\renewcommand{\div}{\operatorname{div}}
\renewcommand{\H}{\mathbf{H}}
\newcommand{\N}[1]{\left\lVert#1\right\rVert}
\newcommand{\e}{\varepsilon}
\newcommand{\bra}{\left\langle}
\newcommand{\ket}{\right\rangle}
\renewcommand{\O}{\Omega}
\DeclareMathOperator{\vol}{vol}
\newtheorem{proposizione}{Proposition}[section]
\newtheorem{teorema}[proposizione]{Theorem}
\newtheorem{lemma}[proposizione]{Lemma}
\title{Singular solutions of the Yamabe problem in the Heisenberg group and their bifurcation}
\author{Claudio Afeltra\footnote{Scuola Normale Superiore, Piazza dei Cavalieri 7, 56126 Pisa (Italy) - claudio.afeltra@sns.it}}
\date{}
\begin{document}

\maketitle

\begin{abstract}
 We prove the existence of a homogeneous singular solution of the critical equation
 $$-\D u = u^{\frac{Q+2}{Q-2}}$$
 on the Heisenberg group $H^n$, where $Q$ is the \textit{homogeneous dimension}. In order to do this, we introduce a suitable concept of normal curvature for hypersurfaces. Furthermore we study
 the bifurcation of non-homogeneous solutions from the homogeneous one.
\end{abstract}

\section{Introduction}
The Yamabe problem has drawn a large interest in Riemannian geometry. Its solution in the compact case (due to the works of Yamabe, Trudinger, Aubin and Schoen)
has constituted a major advance in the fields of geometric analysis and partial differential equations, and it has been drawing attention until today
(on this topic see, for example, \cite{A} for a general treatment).

In the field of CR geometry, the analogous problem of finding a conformal metric with constant Webster curvature (the analogous of scalar curvature).

On the Heisenberg group, which is the ``model'' CR manifold, the Yamabe problem is equivalent to finding the positive solutions of the equation
\begin{equation}\label{Equazione}
 -\D u = u^{\frac{Q+2}{Q-2}},
\end{equation}
where $\D$ is the sublaplacian and $Q$ is the homogeneous dimension (precise definitions are given later).

The positive solutions of this equation satisfying some integrability hypotheses were classified by Jerison and Lee \cite{JL};
geometrically they correspond to conformal factors that trasform the standard pseudohermitian structure of $\H^n$ into the push-forward of the
pseudohermitian structure of the sphere ${\bf S}^{2n+1}\subset\C^{n+1}$ with respect to the Cayley transform, up to translations and dilations.
This classification plays an important role in the solution of the CR Yamabe problem, see \cite{JL2}, \cite{GamYac}, \cite{Gam} and \cite{CMY}.

Additionally to this, it is interesting to study the problem on $\H^n\setminus\{0\}$.
In the Riemannian case all the solutions singular at a point were classified by Caffarelli, Gidas and Spruck (see \cite{CGS}). These form a continuous one-parameter family of radially periodic metrics depending on a parameter $\tau\in(0,1]$, called Delaunay metrics: for $\tau=1$ the metric is homogeneous and corresponds to the
cylindrical metric, and for $\tau\to 0$ tends to a superposition of regular solutions.
This classification has been useful in the study of the profiles of general singular solutions (see \cite{KMPS}), as well as in the study of blow-ups in the problem of
prescribed curvature (see \cite{L1}, \cite{L2}, \cite{CL}).

The author of this article proved (see \cite{Af}) the existence of analogues of the Delaunay metrics for small values of $\tau$, constructed
by perturbing an approximated solution consisting of a series of regular solutions suitable dilated.
In this article, it is proved the existence of a homogeneous solution analogous to the Euclidean one:

\begin{teorema}\label{Teorema}
 There exists a solution $\Psi$ of the equation
 $$-\D \Psi = \Psi^{\frac{Q+2}{Q-2}},$$
 defined on $\H^n\setminus\{0\}$, such that $\Psi\circ\delta_{\lambda}=\lambda^{\frac{Q-2}{2}}\Psi$ and $\Psi(z,t)= \Psi(|z|,t)$.
\end{teorema}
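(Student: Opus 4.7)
\medskip

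\noindent\textbf{Proof plan.} The strategy is to impose the symmetries at the outset, reduce the PDE to a nonlinear ODE on a bounded interval, and then solve the ODE by direct variational methods.

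Using the Koranyi gauge $\rho(z,t) = (|z|^4 + t^2)^{1/4}$, I would introduce the angular coordinate $\alpha \in (-\pi/2, \pi/2)$ by setting $|z|^2 = \rho^2 \cos\alpha$ and $t = \rho^2 \sin\alpha$. The requirements that $\Psi$ be $U(n)$-invariant and $\delta_\lambda$-homogeneous of degree $-(Q-2)/2$ force the ansatz
$$ \Psi(z,t) \;=\; \rho^{-(Q-2)/2}\,\varphi(\alpha) $$
for a single function $\varphi$ of one variable. Substituting into the formula
$$ \D\Psi \;=\; \frac{1}{r^{2n-1}}\,\de_r\bigl(r^{2n-1}\de_r\Psi\bigr) + 4r^2\,\de_t^2\Psi, $$
valid on $U(n)$-invariant functions (with $r=|z|$), the homogeneity factors out an overall $\rho^{-(Q+2)/2}$ and reduces \eqref{Equazione} to an ODE of the form
$$ -\bigl(a(\alpha)\,\varphi'\bigr)' + b(\alpha)\,\varphi \;=\; c(\alpha)\,\varphi^{(Q+2)/(Q-2)}, \qquad \alpha\in(-\pi/2,\pi/2), $$
with smooth positive weights $a,b,c$ that degenerate as explicit powers of $\cos\alpha$ at the endpoints $\alpha=\pm\pi/2$; these endpoints correspond to the characteristic points of the gauge sphere lying on the $t$-axis.

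This ODE is the Euler--Lagrange equation of the constrained minimization
$$ \inf\left\{\int_{-\pi/2}^{\pi/2}\!\bigl(a(\varphi')^2 + b\,\varphi^2\bigr)d\alpha \,:\, \varphi\in H,\ \int_{-\pi/2}^{\pi/2}\! c\,|\varphi|^{2Q/(Q-2)}\,d\alpha = 1\right\}, $$
where $H$ is the completion of $C_c^\infty\bigl((-\pi/2,\pi/2)\bigr)$ in the norm defined by the quadratic form on the left. Since the reduced problem is one-dimensional on a bounded interval, I expect the embedding $H \hookrightarrow L^{2Q/(Q-2)}(c\,d\alpha)$ to be compact even though $2Q/(Q-2)$ is Sobolev-critical on $\H^n$: compactness should follow from weighted Hardy--Sobolev inequalities tailored to the algebraic degeneracy of $a,b,c$ at the endpoints. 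The direct method then yields a minimizer $\varphi_0$, which can be taken non-negative and is positive and smooth on the open interval by the maximum principle and standard elliptic regularity for the linear part of the ODE.

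The principal technical obstacle is to verify that $\Psi = \rho^{-(Q-2)/2}\varphi_0(\alpha)$ extends to a smooth classical solution on all of $\H^n\setminus\{0\}$. On the complement of the $t$-axis this is automatic by subelliptic regularity, since the ODE is non-degenerate away from $\alpha=\pm\pi/2$, but at the characteristic half-lines $\{(0,t):t\ne 0\}$ (corresponding to $\alpha=\pm\pi/2$) one must control the endpoint behavior of $\varphi_0$. I would handle this by a Frobenius-type analysis of the indicial equation at each endpoint, determined by the exponents of $\cos\alpha$ in $a,b,c$, and select the admissible branch producing a bounded $\varphi_0$ whose asymptotic expansion lifts back to a smooth function of $(z,t)$. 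Once smoothness across the $t$-axis is established, $\Psi$ is a genuine positive solution on $\H^n\setminus\{0\}$ with the required symmetries, completing the proof.
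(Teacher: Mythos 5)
Your proposal is correct in substance and lands on the same reduced problem as the paper, but it gets there by a genuinely different and more elementary route. Your angular variable $\alpha$ (with $t=\rho^2\sin\alpha$) is exactly the paper's variable $s$ (the paper sets $\uptau=t/\rho^2=\sin s$), and both approaches end with a Sturm--Liouville-type ODE on $(-\pi/2,\pi/2)$ whose weights are powers of $\cos$, solved by minimizing a Rayleigh-type quotient using compactness of the weighted embedding --- compactness holding precisely because the one-dimensional reduction renders the CR-critical exponent subcritical. The difference is upstream: you substitute the homogeneous $U(n)$-invariant ansatz directly into the radial form of the sublaplacian, whereas the paper first builds a conformally covariant variational formulation on annuli (introducing the canonical pseudohermitian normal curvature to get a boundary term with good conformal behavior), performs the conformal change $\theta\mapsto\rho^{-2}\theta$ to a cylinder, and only then restricts to translation-invariant functions. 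Your route is shorter and self-contained for this theorem; the paper's route buys the geometric interpretation of the boundary condition, the content of Section \ref{Curvatura}, and a framework reused in the bifurcation analysis. Two refinements would tighten your argument. First, your space $H$ (completion of $C_c^\infty$) coincides with the full weighted $H^1$ space the paper uses, because under the paper's identification of that space with rotation-invariant functions in $H^1(S^{n+1})$ the endpoints $\alpha=\pm\pi/2$ are poles of the sphere and hence have zero capacity for $n\ge1$; you should note this, since it is what guarantees the minimizer satisfies the natural (Neumann-type) rather than a Dirichlet condition at the endpoints. Second, the Frobenius analysis at the characteristic half-lines can be avoided: since the $t$-axis has zero capacity, the lifted $\Psi$ solves the equation weakly on all of $\H^n\setminus\{0\}$, and hypoellipticity of $\D$ then gives smoothness there directly. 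Finally, your appeal to ``weighted Hardy--Sobolev inequalities'' for compactness is the right idea but is left vague; the paper makes it concrete via the isometry with rotation-invariant $H^1(S^{n+1})$, Rellich--Kondrachov, and a H\"older interpolation to absorb the mismatch between the weights $(\cos\alpha)^n$ and $(\cos\alpha)^{n-1}$.
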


The above result is proved by posing the problem in a variational form, and then performing a conformal
change that trasforms $\H^n\setminus\{0\}$ in a pseudohermitian cylinder, and imposing symmetries in order to reduce the problem to an ODE with variational structure.

The main difficulty is that, because of the non compactness of $\H^n\setminus\{0\}$, the problem has to be
formulated on a closed annulus $\{1\le |x|\le r\}$ (where $|\cdot|$ is the homogeneous norm), and so one has to put boundary conditions that, under a conformal change, behave in a
treatable way.
It is known that the mean curvature behaves in such a way, indeed the prescription
of the mean curvature of the boundary is considered the most natural boundary condition in the prescribed curvature problem for manifolds with boundary
(see, for example, \cite{E}).
In our case there is not such a concept, except in dimension three (see \cite{CHMY}). So we introduce, in arbitrary dimension, the notion of
canonical pseudohermitian normal curvature. In such a way we can formulate variationally the problem of the prescription of the
Webster curvature with boundary conditions, with a functional that is conformally invariant.

In the second part of the article, we study the problem of the bifurcation of radially periodic solutions from the homogeneous one we found.
That is, considering a parameter $T$, we want to prove that solutions to equation \eqref{Equazione} such that $u\circ\delta_T=T^{-\frac{Q-2}{2}}$
bifurcate from the homogeneous solution for infinitely many values of $T$. This problem has a variational structure: the radially periodic solutions are
critical points of a certain functional $\ci{J}_T:X_T\to\R$ whose Morse index tends to infinity for $T\to\infty$, and so, such that
its second differential is singular for infinitely many values of $T$. We prove the following result.

\begin{teorema}\label{Teorema2}
 There exists arbitrarily large values of $T$ for which $d^2\ci{J}_T$ is singular, and every such value is a bifurcation value.
\end{teorema}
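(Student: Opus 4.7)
The plan is to recast the problem as bifurcation from a trivial branch for a gradient map of the form identity-minus-compact, and then to apply the classical variational bifurcation theorem of B\"ohme--Marino, which asserts that for such families \emph{every} singular parameter is automatically a bifurcation parameter. As a preparatory step, I would rescale the cylindrical radial variable so that the spaces $X_T$ are identified with a single Hilbert space $X$, and translate $u = \Psi + v$ so that the homogeneous solution corresponds to $v = 0$. This produces a $C^2$ family $\tilde{\ci{J}}_T \colon X \to \R$ of potentials depending smoothly on $T$, each critical at the origin.

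The Hessian $L_T := d^2 \tilde{\ci{J}}_T(0)$ is then analysed through the Riesz isomorphism as an operator of the form $I - K_T$, where $K_T$ composes the multiplication operator by $\tfrac{Q+2}{Q-2}\Psi^{4/(Q-2)}$ with the inverse of a self-adjoint coercive elliptic operator on a bounded fundamental domain of the cylinder; compactness of $K_T$ follows from a compact Folland--Stein embedding. To produce singular values tending to infinity, I would invoke the fact, emphasized in the introduction, that the Morse index of $\ci{J}_T$ is unbounded in $T$: since the spectrum of $L_T$ is discrete, bounded below, and continuous in $T$, the growth of the negative eigencount forces eigenvalues to cross zero at arbitrarily large parameter values, and each such crossing is a $T_0$ for which $\ker L_{T_0} \neq 0$.

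The B\"ohme--Marino theorem then concludes the argument: for a $C^2$ family of gradient maps of the form identity-minus-compact admitting a trivial branch, every point of non-invertibility of the linearization is a bifurcation point, so that each of the singular values produced above yields a bifurcating family of non-homogeneous radially periodic solutions. The main analytical obstacle lies in the preparatory step, namely verifying rigorously that $L_T$ admits the identity-minus-compact decomposition in the weighted Sobolev space adapted to the Heisenberg cylindrical geometry, and that the rescaling preserves smooth dependence on $T$; once this functional-analytic framework is set up, both the existence of arbitrarily large singular values and the bifurcation statement at each of them follow formally from spectral theory and B\"ohme--Marino.
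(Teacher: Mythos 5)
Your outline matches the paper's in its main lines: write the Hessian along the trivial branch as $I-K_T$ with $K_T$ compact (via the Folland--Stein embedding), use the unbounded growth of the Morse index (the paper's Proposition \ref{IndiceDiMorse}) to force eigenvalues to cross zero and so produce arbitrarily large singular values of $d^2\ci{J}_T$, and conclude with an abstract variational bifurcation theorem. The first half of the statement is therefore in order. The gap is in the second half. The B\"ohme--Marino theorem, as you invoke it, is not true for families with general parameter dependence: it applies to maps of the form $u\mapsto u-\lambda Lu+o(\N{u})$, where the bifurcation parameter multiplies a \emph{fixed} compact operator, so that the eigenvalues of the linearization move monotonically in $\lambda$ and every degenerate value is automatically a crossing. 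Here $T$ enters through the period, i.e.\ through the domain $\O_T$, and for a general $C^2$ (indeed even analytic) gradient family of the form $I-K_T$ a degenerate parameter need not be a bifurcation point: on $H=\R$ the potential $\Phi_\lambda(u)=\tfrac12\lambda^2u^2+\tfrac14u^4$ has singular Hessian at $\lambda=0$ and no nontrivial critical points whatsoever. What is true in general is that a jump of the Morse index at an isolated degenerate value forces bifurcation; your argument therefore yields bifurcation at those arbitrarily large $T$ where the index actually jumps, but not the assertion that \emph{every} singular $T$ is a bifurcation value.

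To obtain the full statement the paper does two things you omit. It makes the family $T\mapsto L_T$ \emph{analytic}, not merely smooth: it builds a complete system $\gamma_{k,m,T}$ of functions analytic in the pair $(x,T)$, runs Gram--Schmidt to obtain Hilbert bases $\psi_{k,T}$ of $S^1(\O_T)$ depending analytically on $T$, and uses these to define isometries $\Psi_T:Y_T\to Y_2$ conjugating the Hessians into a single space; your ``rescale the radial variable'' identification would have to be shown to produce the same regularity, and you yourself flag this step as the main obstacle without supplying it. The paper then invokes Theorem 8.9 of Mawhin--Willem, verifying its hypothesis $\gamma$ through Corollary 8.3 of that book; it is this result, and not B\"ohme--Marino, that covers every singular value of an analytic family of this type. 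If you replace your last step by such a theorem and supply the analytic identification of the spaces $Y_T$, the proposal becomes a proof; as written, it establishes only that some arbitrarily large values of $T$ are bifurcation values.
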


The article is structured as follows. After the preliminaries of Section \ref{Notazioni}, in Section \ref{Curvatura} we introduce a notion of
curvature of a hypersurface in a pseudohermitian manifold that serves our purposes. In Section \ref{SoluzioneSingolare} we
prove Theorem \eqref{Teorema}
solve the problem by a conformal change of pseudohermitian metric, and through the imposition of natural simmetries.
In Section \ref{Biforcazione} we study the problem of bifurcation.

\section{Preliminaries and notation}\label{Notazioni}
For a general introduction to CR manifolds we refer to \cite{DT}, but we recall here some basic concepts.

A  \textit{CR manifold} is a real smooth manifold $M$ endowed with a subbundle $\ci{H}$ of the complexified tangent bundle of $M$, $T^{\C}M$, such that $\ci{H}\cap\overline{\ci{H}} = \{0\}$
and $[\ci{H},\ci{H}]\subseteq\ci{H}$. We will assume $M$ to be of hypersurface type, that is that $\dim M=2n+1$ and that $\dim\ci{H}=n$.
There exists a non-zero real differential form $\theta$ that is zero on $\g{Re}(\ci{H}\oplus\overline{\ci{H}})$;
it is unique up to scalar multiple by a function.
Such a form is called  \textit{pseudohermitian} or \textit{contact form}. On a \textit{pseudohermitian manifold}, the  \textit{Levi form} on $\ci{H}$ is defined as the 2-form
$L_{\theta}(V,W)=-id\theta(V,\overline{W})=id\theta([V,\overline{W}])$. A CR manifold is said to be \textit{pseudoconvex} if it admits a positive definite
Levi form (this implies every Levi form to be definite), it is said \textit{nondegenerate} if it admits a nondegenerate Levi form.
An almost complex structure $J$ can be defined on $H(M)= \g{Re}(\ci{H}+\overline{\ci{H}})$ in a natural way, by
$J(V+\overline{V}) = i(V - \overline{V})$.
There exists a unique vector field $T$ such that $\theta(T)=1$ and $i_Td\theta = 0$.
This permits to define a natural Riemannian metric $g_{\theta}$, which coincides with the metric $G_{\theta}$ associated to $L_{\theta}$ on $H(M)$, and such that $g_{\theta}(T,T)=1$ and $T$ is orthogonal to $H(M)$.
 
On a nondegenerate pseudohermitian manifold one can define a connection, the Tanaka-Webster connection. This allows to define
curvature operators in an analogous manner as in Riemannian geometry: the pseudohermitian curvature tensor is the curvature of the
Tanaka-Webster connection, the Ricci tensor is
$$\operatorname{Ric}(X,Y)=\operatorname{trace}(Z\mapsto R(Z,X)Y),$$
and the Webster scalar curvature is the trace of the Ricci tensor with respect to the Levi form.
 
If $\widetilde{\theta}=u^{2/n}\theta$, the transformation law of the Webster curvature is
\begin{equation}\label{TrasformazioneCurvatura}
 \widetilde{W}=u^{-1-2/n}\left(-\frac{2n+2}{n}\D_bu + Wu\right),
\end{equation}
(see \cite{DT}), where $\D_b$ is the sublaplacian, which can be defined as the divergence of the subgradient with respect to the natural volume form in such context,
$\theta\wedge (d\theta)^n$.
So the Yamabe problem leads to the equation
$$-\frac{2n+2}{n}\D_bu + Wu=\lambda u^{1+2/n}.$$

The model pseudohermitian manifold, and the one we will study, is the Heisenberg group $\H^n$,
that is the Lie group $\C^n\times\R$ with the product
$$(z_1,t_1)\per (z_2,t_2)=(z_1+z_2, t_1+t_2+2\,\g{Im}(z_1\per\overline{z_2})),$$
endowed with the subbundle spanned by the standard left invariant vector fields
$$Z_{\alfa}=  \frac{\de}{\de z_{\alfa}}+i\overline{z_{\alfa}}\frac{\de}{\de t},$$
and with the unique left invariant pseudohermitian with respect to this CR structure. In this case the sublaplacian coincides with the
group sublaplacian given by
$$\D = 2 \sum_{\alfa=1}^n (Z_{\alfa}\overline{Z_{\alfa}} + \overline{Z_{\alfa}}Z_{\alfa}).$$
We will use also the Koranyi norm, given by
$$|(z,t)|=\left(|z|^4+t^2\right)^{1/4},$$
and the dilations given by
$$\delta_{\lambda}(z,t)=(\lambda z, \lambda^2t)$$
for $\lambda>0$. Notice that, in the above notation, $|\delta_{\lambda}x|= \lambda |x|$.

It turns out that $\H^n$ has zero Webster curvature, and so, up to an inessential constant, the Yamabe problem is equivalent to find positive solution to equation
$$-\D u = u^{\frac{Q+2}{Q-2}},$$
(where $Q=2n+2$ is the \textit{homogeneous dimension}).

An important transformation in the Heisenberg group is the Kelvin inversion
$$\ci{K}:\H\setminus\{0\}\to \H\setminus\{0\}$$
given by
$$\ci{K}(z,t)= \left( \frac{-iz}{t+i|z|^2}, -\frac{t}{\rho^4}\right).$$
$\ci{K}$ leaves the unit sphere invariant, but, unlike its analogous on the Euclidean space, it does not fix the unit sphere pointwise.

We recall the following formulas from \cite{L} for conformal changes of pseudohermitian metric, to which we refer also for the notation.

\begin{proposizione}
 If $Z_1,\ldots, Z_n$ are \ldots and $\nabla Z_{\alfa} = \omega_{\alfa}^{\beta}\otimes Z_{\beta}$
 and $\theta^{\alfa}...$, then, under the conformal change $\theta\mapsto \widetilde{\theta} = e^{2f}\theta$,
 the Tanaka-Webster connection trasforms as
 $$\widetilde{\omega}_{\alfa}^{\beta} = \omega_{\alfa}^{\beta} + 2(f_{\beta}\theta^{\alfa} - f_{\alfa}\theta^{\beta})+
 \delta_{\alfa}^{\beta}(f_{\gamma}\theta^{\gamma}-f^{\gamma}\theta_{\gamma}) + F\per\theta$$
 (where $F$ is a function of $f$ explicitly known, but whose expression is irrelevant for our purposes).
\end{proposizione}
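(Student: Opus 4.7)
The plan is to derive the transformation law directly from the characterization of the Tanaka--Webster connection as the unique linear connection on $TM$ that parallelizes the Reeb vector field, the complex structure $J$ and the Levi form, and whose torsion is \emph{pure}, i.e.\ of the form $\tau^{\alfa} = A^{\alfa}_{\bar{\beta}}\theta^{\bar{\beta}}$ with respect to an admissible coframe. The starting point is to pass to admissible coframes on both sides: given an admissible coframe $(\theta,\theta^{\alfa})$ for $\theta$, the natural admissible coframe for $\widetilde{\theta}=e^{2f}\theta$ is obtained by setting $\widetilde{\theta} = e^{2f}\theta$ and correcting the horizontal forms by a term proportional to $f^{\alfa}\theta$, so that the new Reeb vector field (which is shifted from $T$ by a horizontal vector determined by $df$) pairs correctly with the new coframe.

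Next I would write the ansatz $\widetilde{\omega}_{\alfa}^{\beta} = \omega_{\alfa}^{\beta} + \Phi_{\alfa}^{\beta}$, with $\Phi_{\alfa}^{\beta}$ a $1$-form depending linearly on $df$, and plug it into the first structure equation
\[
  d\widetilde{\theta}^{\alfa} = \widetilde{\theta}^{\beta}\wedge\widetilde{\omega}_{\beta}^{\alfa} + \widetilde{\theta}\wedge\widetilde{\tau}^{\alfa}.
\]
Matching the coefficients of $\theta^{\gamma}$, $\theta^{\bar{\gamma}}$ and $\theta$, and using the purity of the new torsion together with the skew-Hermitian condition relative to the rescaled Levi form $e^{2f}h_{\alfa\bar{\beta}}$, forces the horizontal part of $\Phi_{\alfa}^{\beta}$ to equal exactly $2(f_{\beta}\theta^{\alfa} - f_{\alfa}\theta^{\beta}) + \delta_{\alfa}^{\beta}(f_{\gamma}\theta^{\gamma} - f^{\gamma}\theta_{\gamma})$, while all remaining terms proportional to $\theta$ collapse into a single contribution $F\per\theta$.

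The main obstacle is the bookkeeping: one must carefully track the shift of the Reeb vector field under the conformal change (which is what necessitates the correction of the admissible coframe in the first place), the distinction between index raising with respect to the original and the rescaled Levi form, and the cancellations that isolate the trace term $\delta_{\alfa}^{\beta}(f_{\gamma}\theta^{\gamma} - f^{\gamma}\theta_{\gamma})$ from the other horizontal contributions. Since only the horizontal, tensorial part of the formula is needed in the applications to the prescribed curvature problem, the explicit expression of $F$ is irrelevant and can be left unspecified. The full computation is carried out in \cite{L}, to which I would refer to conclude.
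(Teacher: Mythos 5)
The paper does not prove this proposition at all: it is stated as a recollection of Lee's conformal transformation formula and is justified purely by the citation of \cite{L}. Your outline of the derivation via the uniqueness characterization of the Tanaka--Webster connection (adjusting the admissible coframe for the shifted Reeb field, plugging an ansatz into the first structure equation, and using purity of the torsion to pin down the horizontal part) is the standard argument carried out in \cite{L}, and since you ultimately defer the computation to that reference as well, your approach coincides with the paper's.
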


\section{The canonical pseudohermitian normal curvature}\label{Curvatura}
Let $\Sigma$ be a two-sided hypersurface in $M$ such that $V=\dim(T\Sigma\cap H(M)) = 2n-1$ at every point. If $N$ is a normal vector field to $\Sigma$ with respect to $g_{\theta}$,
the normalization of his orthogonal projection on $H(M)$, $\ni$, is normal to $V$. Let $\xi=-J\ni$. This is a canonical direction (given an orientation
on $\Sigma$). So we define the \textit{canonical pseudohermitian normal curvature} of $\Sigma$ as
$$\kappa = g_{\theta}(\nabla_{\xi} \xi, \ni).$$

\begin{proposizione}
 Under the conformal change $\theta\mapsto \widetilde{\theta}^{2/n} \theta$, $\kappa$ the canonical pseudohermitian normal curvature of
 the new pseudohermitian metric is given by the formula
 $$ \kappa u - \frac{3}{n} \ni(u) = u^{1+\frac{1}{n}} \widetilde{\kappa}.$$
\end{proposizione}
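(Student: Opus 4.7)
The plan is to reduce everything to the transformation law for the Tanaka--Webster connection forms stated in the preceding proposition, together with the observation that the horizontal distribution $H(M)$, the almost complex structure $J$, and hence the subspace $V=T\Sigma\cap H(M)$ and the line $\R\nu$, all depend only on the CR structure and are unchanged by the conformal substitution $\widetilde{\theta}=u^{2/n}\theta$.

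I would first fix the normalisations. Since the Levi form on $H(M)$ scales by $u^{2/n}$, we have $g_{\widetilde{\theta}}=u^{2/n}g_\theta$ on $H(M)$, so the unit projection satisfies $\widetilde{\nu}=u^{-1/n}\nu$, and $\widetilde{\xi}=-\widetilde{J}\widetilde{\nu}=u^{-1/n}\xi$ since $J$ is a CR invariant. Substituting into the definition, using the Leibniz rule and the orthogonality $g_\theta(\xi,\nu)=0$ to kill the extra scalar-derivative term, a one-line computation gives
$$\widetilde{\kappa}=u^{2/n}\,g_\theta\!\left(\widetilde{\nabla}_{u^{-1/n}\xi}(u^{-1/n}\xi),\,u^{-1/n}\nu\right)=u^{-1/n}\,g_\theta(\widetilde{\nabla}_\xi\xi,\nu).$$
After multiplying by $u^{1+1/n}$, the identity to prove reduces to
$$g_\theta\bigl(\widetilde{\nabla}_\xi\xi-\nabla_\xi\xi,\,\nu\bigr)=-\tfrac{3}{n}\,\nu(u)/u=-3\,\nu(f),\qquad f:=\tfrac{1}{n}\log u.$$

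I would then compute the left-hand side in a local $(1,0)$-frame $Z_\alpha$. Expanding $\xi=\xi^\alpha Z_\alpha+\overline{\xi^\alpha}Z_{\bar\alpha}$ and inserting the proposition of Lee, the $F\cdot\theta$-term drops out because $\xi$ is horizontal, leaving a purely algebraic expression in $\xi^\alpha$ and $f_\alpha=Z_\alpha f$. Pairing with $\nu$, using the component relation $\xi^\alpha=-i\nu^\alpha$ coming from $\xi=-J\nu$, and expanding $\nu(f)=\nu^\alpha f_\alpha+\overline{\nu^\alpha f_\alpha}$, the three structurally different summands of $\widetilde{\omega}_\alpha^\beta-\omega_\alpha^\beta$ should combine, after the tangential contributions $\xi(f)$ and the derivatives in the $V$-directions cancel, to give precisely $-3\,\nu(f)$.

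The main obstacle is exactly this last bookkeeping: checking that the coefficient of $\nu(f)$ which emerges from the three summands is $-3$, and that the unwanted derivatives along $\xi$ and along $V$ really drop out, using only CR-algebraic identities and the $L_\theta$-orthogonality between $\nu$, $\xi$ and $V$. Apart from that combinatorial/algebraic step, the proof is the standard conformal-change calculation restricted to the horizontal geometry of $\Sigma$, and is independent of the ambient manifold beyond its pseudohermitian structure.
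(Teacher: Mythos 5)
Your reduction is sound and follows the same route as the paper: you correctly observe that $H(M)$, $J$, and the line $\mathbf{R}\nu$ are conformally invariant, so $\widetilde{\nu}=u^{-1/n}\nu$ and $\widetilde{\xi}=u^{-1/n}\xi$, and the Leibniz term dies against $g_\theta(\xi,\nu)=0$, leaving the identity $g_\theta(\widetilde{\nabla}_\xi\xi-\nabla_\xi\xi,\nu)=-3\,\nu(f)$ with $f=\frac{1}{n}\log u$ to be verified from Lee's transformation law. This matches the paper's normalization step ($\widetilde{\xi}=e^{-f}\xi$) exactly.

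The gap is that the one step carrying all the content of the proposition --- the emergence of the coefficient $-3$ --- is asserted (``should combine \ldots to give precisely $-3\,\nu(f)$'') rather than derived, and you yourself flag it as the main obstacle. In a general frame this bookkeeping is genuinely delicate because the three summands of $\widetilde{\omega}_\alpha{}^\beta-\omega_\alpha{}^\beta$ contribute through different index contractions. The paper closes this by an adapted frame choice that collapses the computation to a single component: take $Z_1,\ldots,Z_n$ with $Z_1+\overline{Z}_1=\xi$, so $\nu=i(Z_1-\overline{Z}_1)$ and $Z_2,\ldots,Z_n$ span $(T\Sigma\otimes\mathbf{C})\cap\mathscr{H}$. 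Then, using $\omega_{\overline{1}}{}^{\overline{1}}=-\omega_1{}^1$ (Lee, formula (4.2)), one gets $\kappa=-i\,\omega_1{}^1(\xi)$, and only the $(1,1)$ component of Lee's formula is needed: the first summand contributes a factor $2$ and the $\delta_\alpha^\beta$ summand a factor $1$, giving $\widetilde{\omega}_1{}^1=\omega_1{}^1+3(Z_1f\,\theta^1-\overline{Z}_1f\,\overline{\theta}^1)$ mod $\theta$; evaluating at $\xi=Z_1+\overline{Z}_1$ yields $3(Z_1-\overline{Z}_1)f=-3i\,\nu(f)$, whence the $-3\,\nu(f)$. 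You should either adopt this frame (in which case your ``combinatorial step'' becomes a two-line computation) or actually carry out the general-frame contraction; as written, the proof is a correct strategy with its decisive calculation missing.
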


\begin{proof}
Let us choose a frame $Z_1,\ldots,Z_n$ for $\ci{H}$ such that $Z_1+\overline{Z}_1=\xi$ and $Z_2,\ldots,Z_n$ form a frame for $(T\Sigma\otimes\C)\cap \ci{H}$.
Because of formula (4.2) in \cite{L}, $\omega_{\overline{1}}^{\overline{1}}= -\omega_1^1$. Then, since $\ni=i(Z_1-\overline{Z}_1)$,
$$\kappa = g_{\theta}(\nabla_{\xi} \xi, \ni) = ig_{\theta}(\nabla_{\xi}Z_1+\nabla_{\xi}\overline{Z}_1, Z_1-\overline{Z}_1) =$$
$$= ig_{\theta}(\omega_1^{\alfa}(\xi) Z_{\alfa} + \omega_{\overline{1}}^{\overline{\alfa}}(\xi) Z_{\overline{\alfa}}, Z_1-\overline{Z}_1) =$$
$$= -i \omega_1^1(\xi) g_{\theta}(Z_1,\overline{Z}_1) - i\omega_1^1(\xi)g_{\theta}(Z_1,\overline{Z}_1) = -i\omega_1^1(\xi).$$

Applying Lee's formula, since $h_{1\overline{1}}=1$ we obtain that
$$\widetilde{\omega}_1^1 = \omega_1^1 + 2(f_1\theta^1 - f_1\theta^1)+ \delta_1^1(f_1\theta^1-f^1\theta_1) + F \per\theta=$$
$$= \omega_1^1 + 3(Z_1f \theta^1 + \overline{Z}_1f\overline{\theta}^1) \;\;\;\;\;\;\operatorname{mod}\theta.$$
Considering that after the conformal change the Levi form is multiplied by $e^{2f}$, and so the canonical tangent vector becomes
$\widetilde{\xi}= e^{-f}\xi$, we obtain that
$$\widetilde{\kappa} = -i\widetilde{\omega}_1^1(\widetilde{\xi}) = -ie^{-f}(\omega_1^1 + 3(Z_1f \theta^1 - \overline{Z}_1f\overline{\theta}^1))(\xi)=$$
$$= e^{-f}\kappa - 3i(Z_1f \theta^1 - \overline{Z}_1f\overline{\theta}^1))(Z_1+\overline{Z}_1) = e^{-f}\kappa -3i(Z_1-\overline{Z}_1)f =$$
$$= e^{-f}\kappa- 3\ni(f).$$
This concludes the proof.
\end{proof}

Recall that the Webster curvature transforms by the formula
$$ -b_n \D_b u + Wu = \widetilde{W}u^{1+\frac{2}{n}},$$
where $b_n = 2+\frac{2}{n}$.
Now let us pick a frame of $H(M)$, $e_1,\ldots, e_{2n}$ such that $e_{2n-1}= \xi$, $e_{2n} = \ni$, and such that it is orthonormal with respect to the Levi form.
Let $e^1,\ldots,e^{2n}$ be the dual basis thereof.
Then, if $Z_{\alfa}= \frac{1}{2}(e_{2\alfa-1}-ie_{2\alfa})$ and $\overline{Z}_{\alfa} = \frac{1}{2}(e_{2\alfa-1}+ie_{2\alfa})$, the dual basis thereof, $\theta^1,\ldots,\theta^n$ is given by the formula $\theta^{\alfa}=e^{2\alfa-1}+ie^{2\alfa}$.
It holds that
$$\theta^{\alfa}\wedge \overline{\theta^{\alfa}} = (e^{2\alfa-1}+ie^{2\alfa})\wedge(e^{2\alfa-1}-ie^{2\alfa})= -2ie^{2\alfa-1}\wedge e^{2\alfa}.$$
By the definition of the Levi form we have
$$ d\theta = i\sum_{\alfa=1}^n \theta^{2\alfa-1}\wedge \overline{\theta^{2\alfa}} = i\sum_{\alfa=1}^n(e^{2\alfa-1}+ie^{2\alfa})\wedge(e^{2\alfa-1}-ie^{2\alfa})= 2\sum_{\alfa=1}^ne^{2\alfa-1}\wedge e^{2\alfa},$$
so
$$\theta\wedge(d\theta)^n = 2^n\theta\wedge\left(\sum_{\alfa=1}^ne^{2\alfa-1}\wedge e^{2\alfa}\right)^n = 2^nn!\theta\wedge e^1\wedge\ldots\wedge e^n=2^nn!\vol_{g_{\theta}}.$$

We want to give a variational formulation to the problem of the prescription of the Webster curvature and the prescription of the canonical pseudohermitian normal curvature on the boundary.

\begin{proposizione}
 The functional
 $$Q(v) = \int_{M}(b_n|\nabla v|^2 +Wv^2) \theta\wedge(d\theta)^n - c_n\int_{\de M} \kappa v^2 \sigma\wedge\theta,$$
 where $c_n=\frac{b_n}{3}n2^nn!$ and $\sigma = e^1\wedge e^2\wedge\ldots\wedge e^{2n-1}$, is invariant by the transformation
$$\theta\mapsto \widetilde{\theta}= u^{2/n}\theta, \;\;\; v\mapsto \widetilde{v} = vu^{-1}.$$
\end{proposizione}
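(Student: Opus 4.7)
The plan is to derive the invariance of $Q$ by integration by parts, thereby reducing it to the pointwise conformal covariance of the CR Yamabe operator $L_\theta := -b_n\D_b + W$ (which is formula \eqref{TrasformazioneCurvatura}) together with the transformation law for $\kappa$ proved in the previous proposition. The constant $c_n$ is then forced by the cancellation of the boundary residues.

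First I would apply Green's identity. Using $\theta\wedge(d\theta)^n = 2^n n!\,d\vol_\theta$ (which follows from the computation $d\theta = 2\sum_\alpha e^{2\alpha-1}\wedge e^{2\alpha}$ done above) and the fact that the corresponding natural boundary form is $\sigma\wedge\theta$ -- which automatically absorbs the factor $\alpha=g_\theta(N,\ni)$ arising when one projects the $g_\theta$-unit outward normal $N=\alpha\ni+\beta T$ onto the horizontal distribution -- one obtains
$$Q(v) = \int_M v\,L_\theta v\,\theta\wedge(d\theta)^n + 2^n n!\,b_n\int_{\partial M}v\,\ni(v)\,\sigma\wedge\theta - c_n\int_{\partial M}\kappa v^2\,\sigma\wedge\theta,$$
and similarly for $\widetilde Q(\tilde v)$ in the tilded geometry.

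Next I would check bulk invariance: the conformal covariance $L_{\tilde\theta}\tilde v = u^{-1-2/n}L_\theta v$ (with $\tilde v = vu^{-1}$), combined with the elementary identity $\tilde\theta\wedge(d\tilde\theta)^n = u^{2(n+1)/n}\theta\wedge(d\theta)^n$, gives $\tilde v\,L_{\tilde\theta}\tilde v\,\tilde\theta\wedge(d\tilde\theta)^n = v\,L_\theta v\,\theta\wedge(d\theta)^n$ pointwise. For the boundary, since the Levi form is rescaled by $u^{2/n}$, an orthonormal horizontal frame rescales by $\tilde e_i = u^{-1/n}e_i$, giving
$$\tilde\ni = u^{-1/n}\ni, \qquad \tilde\sigma\wedge\tilde\theta = u^{(2n+1)/n}\sigma\wedge\theta.$$
Substituting $\tilde v = vu^{-1}$ and expanding, one obtains
$$\tilde v\,\tilde\ni(\tilde v)\,\tilde\sigma\wedge\tilde\theta = \bigl(v\,\ni(v) - v^2 u^{-1}\ni(u)\bigr)\,\sigma\wedge\theta,$$
so the integration-by-parts residue reproduces itself modulo an anomaly $-2^n n!\,b_n\,v^2 u^{-1}\ni(u)\,\sigma\wedge\theta$. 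Plugging in $\tilde\kappa = u^{-1-1/n}\bigl(\kappa u - \tfrac{3}{n}\ni(u)\bigr)$ from the previous proposition yields
$$-c_n\,\tilde\kappa\,\tilde v^2\,\tilde\sigma\wedge\tilde\theta = -c_n\kappa v^2\,\sigma\wedge\theta + \tfrac{3c_n}{n}\,v^2 u^{-1}\ni(u)\,\sigma\wedge\theta,$$
and the anomalous terms cancel iff $\tfrac{3c_n}{n} = 2^n n!\,b_n$, i.e.\ $c_n = \tfrac{b_n}{3}\,n\,2^n n!$, which is precisely the stated value.

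The main obstacle is the careful bookkeeping of the constant $2^n n!$ relating $\theta\wedge(d\theta)^n$ to $d\vol_\theta$ and the correct identification of $\sigma\wedge\theta$ as the conformally-covariant boundary measure (which automatically encodes both the induced Riemannian area and the horizontal tilt of the normal); once these normalizations are pinned down, the invariance follows from mechanical substitution of the transformation laws for $L_\theta$, $\ni$, $\kappa$, and the volume forms.
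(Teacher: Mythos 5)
Your proof is correct and lands on the same constant, but it packages the bulk computation differently from the paper. The paper never passes through Green's identity for the original functional: it expands $\int_M|\widetilde{\nabla}\widetilde{v}|^2\,\widetilde{\theta}\wedge(d\widetilde{\theta})^n$ directly with $\widetilde{v}=vu^{-1}$, integrates by parts the cross term $-\nabla\log u\cdot\nabla(v^2)$, and cancels the resulting bulk terms $|\nabla\log u|^2+\Delta_b\log u$ against those produced by the transformation law of $W$; only the boundary residue $-2^nn!\,b_n\int_{\partial M}v^2\,\nu(\log u)\,\sigma\wedge\theta$ survives and is killed by the $-\tfrac{3}{n}\nu(u)$ term in the transformation of $\kappa$, which fixes $c_n$ exactly as in your final step. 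Your route instead converts both $Q(v)$ and $\widetilde{Q}(\widetilde{v})$ to the form $\int_M vL_\theta v$ plus boundary residues and invokes the pointwise conformal covariance $L_{\widetilde{\theta}}(u^{-1}v)=u^{-1-2/n}L_\theta v$; this is cleaner (it avoids the $\Delta_b\log u$ bookkeeping), at the cost of leaning on a fact slightly stronger than formula \eqref{TrasformazioneCurvatura} literally states, since that formula is only the case $v=u$. The full covariance for arbitrary $v$ is standard (it is the defining property of the CR conformal sublaplacian, cf.\ \cite{JL2}), so this is not a gap, but it deserves an explicit citation or a one-line derivation. Both arguments share the one genuinely delicate point, namely that the restriction of $\sigma\wedge\theta$ to $\partial M$ is the correct boundary measure absorbing the horizontal tilt of the $g_\theta$-unit normal; you assert this in the same spirit as the paper's ``it is easy to verify,'' so you are on equal footing there.
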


\begin{proof}
Under this conformal change $G_{\theta}\mapsto u^{2/n}G_{\theta}$, and so $\widetilde{\nabla}=u^{-2/n}\nabla$. Therefore
$$\int_{M}|\widetilde{\nabla} \widetilde{v}|^2 \widetilde{\theta}\wedge(d\widetilde{\theta})^n = \int_{M}u^{-2/n}|\nabla(u^{-1}v)|^2 u^{2(n+1)/n}\theta\wedge(d\theta)^n=$$
$$ = u^2 \int_{M} |u^{-1}\nabla v - u^{-2}v\nabla u|^2 \theta\wedge(d\theta)^n =$$
$$=\int_{M}\left( |\nabla v|^2 + u^{-2}v^2|\nabla u|^2 - 2u^{-1}v\nabla u\per\nabla v\right)\theta\wedge(d\theta)^n=$$
$$= \int_{M} |\nabla v|^2 + \int_{M}\left( v^2|\nabla \log u|^2 - \nabla \log u\per\nabla (v^2)\right)\theta\wedge(d\theta)^n=$$
$$= \int_{M} |\nabla v|^2 + \int_{M}v^2\left( |\nabla \log u|^2 + \D_b\log u\right)\theta\wedge(d\theta)^n - 2^nn!\int_{\de M}v^2 g_{\theta}(\grad\log u,\xi)\ci{V},$$
where $\ci{V}$ is the volume form associated to the restiction of $g_{\theta}$. It is easy to verify that for every $X$ in $H(M)$, the restriction
of $g_{\theta}(\xi,X)\ci{V}$ is equal to the restriction of $e^{2n}(X)\sigma\wedge\theta$. So
$$\int_{M}|\widetilde{\nabla} \widetilde{v}|^2 \widetilde{\theta}\wedge(d\widetilde{\theta})^n =$$
$$= \int_{M} |\nabla v|^2 + \int_{M}v^2\left( |\nabla \log u|^2 + \D_b\log u\right)\theta\wedge(d\theta)^n - 2^nn!\int_{\de M}v^2 \ni(\log u)\sigma\wedge\theta.$$
Thanks to the conformal change formula,
$$ \int_{M} \widetilde{W}\widetilde{v}^2 \widetilde{\theta}\wedge(d\widetilde{\theta})^n =
\int_{M}( -b_nu^{-1-2/n}\D_b u + Wu^{-2/n})v^2u^{-2}u^{2+2/n}\theta\wedge(d\theta)^n =$$
$$ = \int_{M} (-b_n u^{-1}\D_b u + W) v^2 \theta\wedge(d\theta)^n .$$
It holds that
$$\D_b\log u = \div(\grad\log u) = \div\left(\frac{\grad u}{u}\right) = \frac{\D_b u}{u} - \frac{|\grad u|^2}{u^2} = \frac{\D_b u}{u} - |\grad\log u|^2,$$
and so
$$ \int_{M} \widetilde{W}\widetilde{v}^2 \widetilde{\theta}\wedge(d\widetilde{\theta})^n = \int_{M} Wv^2 \theta\wedge(d\theta)^n
-b_n \int_{M} (\D_b\log u +|\grad\log u|^2) v^2 \theta\wedge(d\theta)^n .$$
Finally
$$ \int_{\de M} \widetilde{\kappa} \widetilde{v}^2 \widetilde{\sigma}\wedge\widetilde{\theta} =
\int_{\de M} \left(u^{-1/n}\kappa -\frac{3}{n}u^{-1-1/n}\ni(u)\right)v^2u^{-2}u^{2+1/n}\sigma\wedge\theta =$$
$$= \int_{\de M} \kappa v^2\sigma\wedge\theta - \frac{3}{n}\int_{\de M} \ni(\log u) v^2 \sigma\wedge\theta.$$
By summing the above identities we get the desired result.
\end{proof}

One can easily check the following Proposition

\begin{proposizione}\label{formulazione}
 A conformal change has Webster curvature $W_1$ and canonical pseudohermitian normal curvature $\kappa_1$ if and only if it is a stationary point of
the functional
$$I_{W_1,\kappa_1}(v) = Q(v) - \frac{n}{n+1}\int_M W_1v^{2+2/n}\theta\wedge(d\theta)^n + \frac{nc_n}{2n+1}\int_{\de M} \kappa_1 v^{2+1/n}\sigma\wedge\theta,$$
that is invariant for the same transformation of $Q$.
\end{proposizione}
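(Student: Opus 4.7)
The plan is to derive the Euler--Lagrange equations of $I_{W_1,\kappa_1}$ and match them, term by term, against the Webster-curvature and canonical normal-curvature transformation laws; the conformal invariance is then a routine check of conformal weights using the invariance of $Q$ already proved.

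First I would compute the first variation $\frac{d}{dt}\big|_{t=0} I_{W_1,\kappa_1}(u+t\varphi)$ for an arbitrary smooth $\varphi$. The prefactors simplify via $\frac{n}{n+1}\bigl(2+\tfrac{2}{n}\bigr)=2$ and $\frac{n}{2n+1}\bigl(2+\tfrac{1}{n}\bigr)=2$, giving
$$\delta I = 2b_n\int_M \grad u\per\grad\varphi\,\theta\wedge(d\theta)^n + 2\int_M (Wu - W_1 u^{1+2/n})\varphi\,\theta\wedge(d\theta)^n + 2c_n\int_{\de M}(\kappa_1 u^{1+1/n} - \kappa u)\varphi\,\sigma\wedge\theta.$$
I would then integrate by parts in the gradient term, recycling the identity
$$\int_M \grad u\per\grad\varphi\,\theta\wedge(d\theta)^n = -\int_M \varphi\,\D_b u\,\theta\wedge(d\theta)^n + 2^n n!\int_{\de M}\varphi\,\ni(u)\,\sigma\wedge\theta$$
that was established inside the proof of the preceding proposition. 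Collecting coefficients gives, in the bulk, $-b_n\D_b u + Wu = W_1 u^{1+2/n}$, which by the Webster transformation formula is equivalent to $\widetilde{W}=W_1$; and, on the boundary, $b_n\cdot 2^n n!\,\ni(u) - c_n\kappa u + c_n\kappa_1 u^{1+1/n} = 0$. Dividing by $c_n$ and inserting the definition $c_n=\frac{b_n}{3}n\cdot 2^n n!$, so that $b_n\cdot 2^n n!/c_n = 3/n$, the condition becomes $\kappa u - \frac{3}{n}\ni(u) = \kappa_1 u^{1+1/n}$, which by the preceding proposition is exactly $\widetilde{\kappa} = \kappa_1$.

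For the conformal invariance, $Q$ is invariant by the previous proposition, so it suffices to check the two new terms. Under $\widetilde{\theta}=u^{2/n}\theta$, $\widetilde{v}=vu^{-1}$, the bulk volume form satisfies $\widetilde{\theta}\wedge(d\widetilde{\theta})^n = u^{2+2/n}\theta\wedge(d\theta)^n$ and $\widetilde{v}^{2+2/n}=v^{2+2/n}u^{-(2+2/n)}$, so the $W_1$-term is unchanged. Since the Levi-form rescaling by $u^{2/n}$ forces each $e^i$ to scale by $u^{1/n}$, one has $\widetilde{\sigma}\wedge\widetilde{\theta} = u^{2+1/n}\sigma\wedge\theta$, which cancels against $\widetilde{v}^{2+1/n}=v^{2+1/n}u^{-(2+1/n)}$ and yields invariance of the $\kappa_1$-term.

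The computation is essentially mechanical. The only step requiring genuine care is the boundary integration by parts: one must use the identification between $g_\theta(\ni,\cdot)\ci{V}$ and $\sigma\wedge\theta$ derived in the previous proof to produce the correct $2^n n!$ factor, so that the specific choice $c_n = \frac{b_n}{3}n\cdot 2^n n!$ gives exactly the coefficient $3/n$ mandated by the normal-curvature transformation law.
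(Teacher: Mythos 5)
The paper offers no proof of this proposition (it is dismissed as ``easily checked''), and your derivation --- compute the first variation, integrate by parts using the identification of $g_\theta(\nu,\cdot)\ci{V}$ with $\sigma\wedge\theta$, and match the bulk and boundary Euler--Lagrange equations against the two transformation laws --- is exactly the argument the author is alluding to. Your conformal-invariance check is also correct: $\widetilde{\theta}\wedge(d\widetilde{\theta})^n=u^{2+2/n}\,\theta\wedge(d\theta)^n$ and $\widetilde{\sigma}\wedge\widetilde{\theta}=u^{2+1/n}\,\sigma\wedge\theta$ are consistent with the computations inside the paper's proof of the invariance of $Q$.

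There is, however, one concrete arithmetic error, and it is not harmless. You assert $\frac{n}{2n+1}\bigl(2+\tfrac{1}{n}\bigr)=2$, but $2+\tfrac1n=\tfrac{2n+1}{n}$, so this product equals $1$, not $2$. Hence the variation of the boundary term is $c_n\int_{\de M}\kappa_1 u^{1+1/n}\varphi\,\sigma\wedge\theta$, not $2c_n\int_{\de M}\kappa_1 u^{1+1/n}\varphi\,\sigma\wedge\theta$. Carrying the correct factor through, the boundary condition at a critical point reads
$$2b_n2^nn!\,\nu(u)-2c_n\kappa u+c_n\kappa_1u^{1+1/n}=0,$$
and since $b_n2^nn!/c_n=3/n$ this is $2\bigl(\kappa u-\tfrac{3}{n}\nu(u)\bigr)=\kappa_1u^{1+1/n}$, i.e.\ $\widetilde{\kappa}=\kappa_1/2$ rather than $\widetilde{\kappa}=\kappa_1$. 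So either the coefficient in the statement should be $\frac{2nc_n}{2n+1}$ (which is what your bulk computation, where $\frac{n}{n+1}(2+\tfrac2n)=2$ genuinely holds, would lead one to expect by analogy), or the conclusion as you state it does not follow. You should correct the arithmetic and flag the factor of $2$ explicitly, rather than letting a false identity make the stated coefficient appear to work.
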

 
 \section{Proof of Theorem \ref{Teorema}}\label{SoluzioneSingolare}
 Now that we have a variational and conformally covariant formulation of the problem of prescribed curvature with boundary conditions, thanks to Proposition \ref{formulazione}, we
 study this problem on suitable annuli, imposing that the boundary has zero curvature, a natural condition because of the simmetry given by the Cayley transform.
 So let us study the problem
 $$\begin{cases}
    -b_n\D_b u = u^{1+2/n} \;\;\;\; \text{on} \;\;\;\; A_r\\
    -\frac{3}{n} \ni(u) + \kappa_{A_r}u = 0 \;\;\;\; \text{on} \;\;\;\; \de A_r
   \end{cases}$$
 where $A_r= B_r\setminus \overline{B}_1$, and $B_r=B_r(0)$ with respect to the Koranyi norm.
 The latter problem is equivalent to find the critical points of
 $$I(v)= b_n\int_{M}|\nabla v|^2 \theta\wedge(d\theta)^n - c_n\int_{\de M} \kappa v^2 \sigma\wedge\theta - \frac{n}{n+1}\int_M v^{2+2/n}\theta\wedge(d\theta)^n.$$
 We restrict the functional to functions such that $u(x,t)=u(|x|,t)$.
 
 Let us consider the conformal change
 $$\theta\mapsto \widetilde{\theta} = \rho^{-2}\theta,$$
 where $\rho=|x|$.
 
 \begin{lemma}
  The Webster curvature of $\widetilde{\theta}$ is
  $$\widetilde{W}= -b_nu^{-1-\frac{2}{n}}\D_b u = -b_n\rho^{n+2}\D_b(\rho^{-n})=b_nn^2 \frac{|x|^2}{\rho^2},$$
  and the mean curvature of the boundary of $A_r$ is zero.
 \end{lemma}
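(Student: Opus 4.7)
Both claims boil down to substituting $u = \rho^{-n}$ into the conformal transformation formulas established above, so the plan is to identify this conformal factor and then apply the Webster law for the first part, the $\kappa$-law for the second. Writing $\widetilde{\theta} = \rho^{-2}\theta = u^{2/n}\theta$ forces $u = \rho^{-n}$, and since $\H^n$ has $W\equiv 0$, equation \eqref{TrasformazioneCurvatura} gives immediately $\widetilde{W} = -b_n u^{-1-2/n}\D_b u = -b_n\rho^{n+2}\D_b(\rho^{-n})$, which is the first of the three stated equalities. The remaining step is the explicit evaluation of $\D_b(\rho^{-n})$: I would use the chain rule $\D_b f(\rho) = f''(\rho)|\grad_b\rho|^2 + f'(\rho)\D_b\rho$ for radial functions, combined with the two standard identities $|\grad_b\rho|^2 = |z|^2/\rho^2$ (direct from the expressions of the horizontal vector fields $X_\alpha, Y_\alpha$) and $\D_b\rho = (Q-1)|z|^2/\rho^3$ (forced by the $\D_b$-harmonicity of $\rho^{2-Q}$ away from the origin). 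With $f(\rho) = \rho^{-n}$ the cancellation $n(n+1) - n(2n+1) = -n^2$ yields $\D_b(\rho^{-n}) = -n^2|z|^2\rho^{-n-4}$, hence $\widetilde{W} = b_n n^2 |z|^2/\rho^2$; the $|x|^2$ appearing in the statement I read as a misprint for $|z|^2$, since $|x| = \rho$ by definition.

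For the boundary curvature I would plug the same $u = \rho^{-n}$ into the transformation formula $\kappa u - \frac{3}{n}\ni(u) = u^{1+1/n}\widetilde{\kappa}$ of the previous proposition. Using $\ni(\rho^{-n}) = -n\rho^{-n-1}\ni(\rho)$ and, on the Koranyi sphere with outward horizontal normal $\ni = \grad_b\rho/|\grad_b\rho|$, the equality $\ni(\rho) = |\grad_b\rho| = |z|/\rho$, this rearranges to $\widetilde{\kappa} = \rho\kappa + 3|z|/\rho$. So $\widetilde{\kappa} = 0$ on the outer sphere $\{\rho = r\}$ is equivalent to the pointwise identity $\kappa = -3|z|/\rho^2$, with the sign-flipped analogue on the inner sphere $\{\rho = 1\}$ where the outward normal to $A_r$ points towards the origin.

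This last identity is where the real work lies. I would pick a Tanaka-Webster adapted unitary frame $Z_1,\ldots,Z_n$ near the sphere with $Z_1 + \overline{Z}_1 = \xi$, as in the proof of the previous proposition, so that $\kappa = -i\omega_1^1(\xi)$, and then express this adapted frame in terms of the standard left-invariant frame of $\H^n$, with respect to which the Tanaka-Webster connection is identically zero. The form $\omega_1^1$ then reduces to the relevant entry of the logarithmic derivative of the change-of-frame matrix, which can be evaluated pointwise on the sphere. Since $\widetilde{\theta}$ is $\delta_\lambda$-invariant, $\widetilde{\kappa}$ is constant in the radial direction, so the verification need only be performed on $\{\rho = 1\}$, and the residual $U(n)$-invariance further trims it to essentially a one-parameter computation. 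The frame bookkeeping is the main technical hurdle; everything else is substitution into transformation laws already in hand.
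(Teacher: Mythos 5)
Your treatment of the Webster curvature is correct and essentially the paper's: both arguments reduce to evaluating $\D_b(\rho^{-n})$ and substituting into the transformation law with $W\equiv 0$. The paper does this by expanding $X_{\alfa}^2(\rho^{-n})$ and $Y_{\alfa}^2(\rho^{-n})$ term by term, whereas you use the radial chain rule together with $|\grad_b\rho|^2=|z|^2/\rho^2$ and the $\D_b$-harmonicity of $\rho^{2-Q}$; this is a tidier route to the same cancellation $n(n+1)-n(2n+1)=-n^2$, and your reading of $|x|^2$ as the squared norm of the complex part is the intended one (in the paper's notation $x$ denotes the $\C^n$-component and $\rho$ the Koranyi norm).

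The boundary statement is where your proposal has a genuine gap. The reduction via the transformation law to the pointwise identity $\kappa=-3|z|/\rho^2$ on the Koranyi sphere is a correct and clean reformulation, but that identity \emph{is} the content of the claim, and you explicitly defer its verification (``the frame bookkeeping is the main technical hurdle'') rather than carrying it out: nothing established so far tells you the value of $\kappa$ for the Koranyi sphere with respect to the flat structure, and computing $\omega_1^1(\xi)$ in an adapted unitary frame on the sphere is a nontrivial calculation that the proposal does not perform. The paper sidesteps this computation entirely by a symmetry argument: the Kelvin transform, the unitary rotations $(z,t)\mapsto(Az,t)$, and the dilations are all isopseudohermitian for $\widetilde{\theta}=\rho^{-2}\theta$, and for every point of $\de A_r$ one can compose these to obtain a transformation fixing that point, preserving its boundary component, and reversing the orientation of the hypersurface; since reversing the orientation flips $\ni$ and $\xi$ and hence sends $\widetilde{\kappa}$ to $-\widetilde{\kappa}$, this forces $\widetilde{\kappa}=0$ with no frame computation at all. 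Either adopt that symmetry argument (your own remarks about dilation and $U(n)$ invariance are most of the way there --- what is missing is an orientation-reversing symmetry such as the Kelvin transform) or actually produce the adapted frame and evaluate $\omega_1^1(\xi)$; as written, the second half of the lemma is not proved.
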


 \begin{proof}
 We have
 $$X_{\alfa}(\rho^4)= \left(\frac{\de}{\de x^{\alfa}}+2y^{\alfa}\frac{\de}{\de t}\right)(|x|^4 +t^2) =$$
 $$= 4(x_{\alfa}^3+(|x|^2-x_{\alfa}^2)x_{\alfa}+y_{\alfa}t) = 4( |x|^2x_{\alfa} +y_{\alfa}t);$$
 
 $$Y_{\alfa}(\rho^4) =\left(\frac{\de}{\de y^{\alfa}}-2x^{\alfa}\frac{\de}{\de t}\right)(|x|^4 +t^2) = 4(|x|^2y_{\alfa}-x_{\alfa}t);$$
 
 $$X_{\alfa}^2(\rho^4) = 4\left(\frac{\de}{\de x^{\alfa}}+2y^{\alfa}\frac{\de}{\de t}\right)(x_{\alfa}^3+|y_{\alfa}|^2x_{\alfa}+y_{\alfa}t)
 = 4(|x|^2 + 2|x_{\alfa}|^2 + 2|y_{\alfa}|^2);$$
 
 $$ Y_{\alfa}^2(\rho^4) = 4\left(\frac{\de}{\de y^{\alfa}}-2x^{\alfa}\frac{\de}{\de t}\right)(y_{\alfa}^3+|x_{\alfa}|^2y_{\alfa}-x_{\alfa}t)=
 4(|x|^2 + 2|x_{\alfa}|^2 + 2|y_{\alfa}|^2).$$
 
 $$X_{\alfa}^2(\rho^{-n}) = X_{\alfa}(X_{\alfa}((\rho^4)^{-n/4})) = -\frac{n}{4} X_{\alfa}(\rho^{-n-4} X_{\alfa}(\rho^4))= $$
 $$= \frac{n(n+4)}{16}\rho^{-n-8} |X_{\alfa}(\rho^4)|^2 -\frac{n}{4}\rho^{-n-4} X^2_{\alfa}(\rho^4) = $$
 $$ = \frac{n(n+4)}{16}\rho^{-n-8} 16( |x|^2x_{\alfa} +y_{\alfa}t)^2 -\frac{n}{4}\rho^{-n-4}4(|x|^2 + 2|x_{\alfa}|^2 + 2|y_{\alfa}|^2)=$$
 $$ = n(n+4)\rho^{-n-8} (|x|^2x_{\alfa} +y_{\alfa}t)^2 -n\rho^{-n-4}(|x|^2 + 2|x_{\alfa}|^2 + 2|y_{\alfa}|^2),$$
 and analogously
 $$Y_{\alfa}^2(\rho^{-n}) = n(n+4)\rho^{-n-8} (|x|^2y_{\alfa} -x_{\alfa}t)^2 -n\rho^{-n-4}(|x|^2 + 2|x_{\alfa}|^2 + 2|y_{\alfa}|^2),$$
 so
 $$ \D_b (\rho^{-n}) = \sum_{\alfa=1}^n (X_{\alfa}^2+Y_{\alfa}^2)(\rho^{-n}) = $$
 $$= n(n+4)\rho^{-n-8}(|x|^6 + |x|^2t^2) -2n(n+2)\rho^{-n-4}|x|^2 = -n^2 \rho^{-n-4}|x|^2.$$
 
Since $u=\rho^{-n}$, by formula \eqref{TrasformazioneCurvatura} we get the desired result.

It can be readily verified that the Kelvin transform is isopseudohermitian with respect to $\widetilde{\theta}$
(that is, it preserves the pseudohermitian structure). Also the transformations of $\H^n$ of the form $(z,t)\mapsto(Az,t)$ with $A$ unitary,
and the dilations, are isopseudohermitian. So, for every point $x$ of $\de A_r$, there is a isopseudohermitian transformation that
fixes $x$, leaves its component of $\de A_r$ invariant, but reverses the orientation. Since reversing the orientation changes sign to
$\widetilde{\kappa}$, it follows that $\widetilde{\kappa}=0$.

Since the Cayley transform is isopseudohermitian with respect to $\widetilde{\theta}$ (that is, it preserves the pseudohermitian structure),
and that the transformations of $\H^n$ of the form $(z,t)\mapsto(Az,t)$ with $A$ unitary also are, it can be proved by simmetry that $\widetilde{\kappa}=0$.
\end{proof}

Therefore, thanks to Proposition \ref{formulazione},
$$\widetilde{I}(v) = b_n\int_{A_r}\left(|\widetilde{\nabla}v|_{\widetilde{\theta}}^2 + n^2 \frac{|x|^2}{\rho^2}v^2\right)\widetilde{\theta}\wedge(d\widetilde{\theta})^n-
\frac{n}{n+1}\int_{A_r} v^{2+2/n}\widetilde{\theta}\wedge(d\widetilde{\theta})^n.$$

We want to impose that the solution is homogeneous and symmetric, in the sense that $u\circ\delta_{\lambda}=\lambda^{\frac{Q-2}{2}}u$ and $u(x,t)=u(|x|,t)$.

We want to express this functional in suitable coordinates.

\begin{lemma}\label{coordinate}
 If $v=v(|x|,t)$, in the coordinates $l=\frac{1}{n}\log\rho\in\R$, $\uptau =t/\rho^2\in[-1,1]$ and $\gamma = x/|x|\in {\bf S}^{2n-1}$,
 it holds that
 $$|\widetilde{\nabla}v|_{\widetilde{\theta}}^2 = (1-\uptau^2)^{3/2}\left|\frac{\de v}{\de\uptau}\right|^2+ \frac{1}{4n^2}(1-\uptau^2)^{1/2}\left|\frac{\de v}{\de l}\right|^2.$$
\end{lemma}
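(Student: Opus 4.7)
The plan is to reduce the computation to the horizontal gradient in the flat Heisenberg pseudohermitian structure and then apply two successive changes of variables.

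First, I would observe that $\widetilde{\theta}=\rho^{-2}\theta$ is the conformal change corresponding to $u=\rho^{-n}$ (so that $u^{2/n}=\rho^{-2}$). As in the proof of the invariance of $Q$ above, the horizontal metric rescales as $G_{\widetilde{\theta}}=\rho^{-2}G_{\theta}$, and hence
$$|\widetilde{\nabla}v|^2_{\widetilde{\theta}}=\rho^{2}|\nabla v|^2_{\theta}.$$
So it is enough to compute $|\nabla v|^2_{\theta}$ for $v=v(\rho,t)$ with respect to the flat Heisenberg metric and then multiply by $\rho^2$ at the end.

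Next, I would expand $|\nabla v|^2_{\theta}$ in the standard left-invariant horizontal frame $X_\alfa=\de/\de x^\alfa+2y^\alfa\de/\de t$, $Y_\alfa=\de/\de y^\alfa-2x^\alfa\de/\de t$, which is orthogonal with respect to the Levi normalization, so that $|\nabla v|^2_{\theta}$ is a fixed constant multiple of $\sum_\alfa((X_\alfa v)^2+(Y_\alfa v)^2)$. The derivatives $X_\alfa\rho$ and $Y_\alfa\rho$ follow at once from the formulas for $X_\alfa(\rho^4)$ and $Y_\alfa(\rho^4)$ already computed in the previous lemma, while $X_\alfa t=2y^\alfa$ and $Y_\alfa t=-2x^\alfa$. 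Writing $X_\alfa v=v_\rho X_\alfa\rho+v_t X_\alfa t$ and analogously for $Y_\alfa$, squaring, summing over $\alfa$, and exploiting the identity $(|x|^2x^\alfa+y^\alfa t)^2+(|x|^2y^\alfa-x^\alfa t)^2=\rho^4((x^\alfa)^2+(y^\alfa)^2)$, I obtain $|\nabla v|^2_{\theta}$ as a sum of three terms in $v_\rho^2$, $v_t^2$ and $v_\rho v_t$ whose coefficients depend explicitly and only on $|x|$, $\rho$ and $t$.

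Finally, I would pass to the coordinates $(l,\uptau)$ via $\rho=e^{nl}$, $t=\uptau\rho^2$ and $|x|^2=\rho^2\sqrt{1-\uptau^2}$, using the chain-rule identities
$$v_\rho=\frac{v_l}{n\rho}-\frac{2\uptau v_\uptau}{\rho},\qquad v_t=\frac{v_\uptau}{\rho^2}.$$
After multiplying by $\rho^2$, all explicit powers of $\rho$ cancel. The substantive — and essentially only — point is that the two $v_l v_\uptau$ cross terms produced, respectively, by $v_\rho^2$ and by $v_\rho v_t$ cancel exactly, leaving the diagonal expression claimed in the statement. The main bookkeeping obstacle is to fix the Levi-normalization constant of the horizontal metric so that the coefficients $(1-\uptau^2)^{3/2}$ and $\frac{1}{4n^2}(1-\uptau^2)^{1/2}$ come out with precisely the numerical factors in the lemma.
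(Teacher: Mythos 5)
Your proposal is correct, and I checked that it closes: with the normalization $|\nabla v|^2_{\theta}=\tfrac14\sum_{\alpha}\bigl((X_\alpha v)^2+(Y_\alpha v)^2\bigr)$ your intermediate expression is
$$|\widetilde{\nabla}v|^2_{\widetilde{\theta}}=\rho^2|\nabla v|^2_{\theta}=\frac{|x|^2}{4}v_\rho^2+\rho^2|x|^2v_t^2+\frac{|x|^2t}{\rho}v_\rho v_t,$$
and substituting $v_\rho=\frac{v_l}{n\rho}-\frac{2\uptau v_\uptau}{\rho}$, $v_t=\frac{v_\uptau}{\rho^2}$, $|x|^2=\rho^2\sqrt{1-\uptau^2}$, $t=\uptau\rho^2$ makes the two cross terms $\mp\frac{\uptau}{n}\sqrt{1-\uptau^2}\,v_lv_\uptau$ cancel and yields exactly the stated coefficients. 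The computation is in essence the same as the paper's, but organized differently: you work at a general point and dispose of the sum over $\alpha$ with the identity $(|x|^2x_\alpha+y_\alpha t)^2+(|x|^2y_\alpha-x_\alpha t)^2=\rho^4(x_\alpha^2+y_\alpha^2)$, whereas the paper first invokes the unitary invariance of $\widetilde{\theta}$ to reduce to the points $((1-t^2)^{1/4},0,\ldots,0,t)$ of the unit Koranyi sphere --- where only $X_1$ and $Y_1$ act nontrivially on a function of $(|x|,t)$ --- computes there, and then extends by dilation invariance. Your route trades the symmetry argument for one extra algebraic identity and is arguably more self-contained; the paper's avoids the summation entirely. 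The single point you leave open, the Levi normalization constant, is not a free parameter to be reverse-engineered: it is fixed by the paper's convention $Z_\alpha=\tfrac12(e_{2\alpha-1}-ie_{2\alpha})$ with $(e_i)$ Levi-orthonormal, which identifies $X_\alpha,Y_\alpha$ with $e_{2\alpha-1},e_{2\alpha}$ up to that factor and produces precisely the $\tfrac14$ used above; stating this explicitly would complete your argument.
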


\begin{proof} A transformation of $\H^n$ of the form $(x,t)\mapsto(Ax,t)$ with $A$ linear is an isomorphism of the pseudohermitian structure if and only if $A$ is unitary.
Since this kind of transformations preserves the sphere of unit radius, and since the action of the unitary group is transitive between vectors of the same length,
we can calculate $|\widetilde{\nabla}v|_{\widetilde{\theta}}^2$ in the points of the curve
$$(\sqrt[4]{1-t^2}, 0, \ldots, 0, t).$$
At such points
$$X_{\alfa}= \frac{\de}{\de x_{\alfa}}$$
for every $\alfa=1,\ldots,n$,
$$Y_{\alfa} = \frac{\de}{\de y_{\alfa}}$$
for every $\alfa\ne 1$, and
$$Y_1 = \frac{\de}{\de y_1} - 2 \sqrt[4]{1-t^2}\frac{\de}{\de t}.$$
Using the symmetry of $v$ in $x$, we get
$$|\widetilde{\nabla}v|_{\widetilde{\theta}}^2 = v^{-2/n} |\nabla v|_{\theta}^2 = \frac{1}{4}|X_1u|^2 + \frac{1}{4}|Y_1u|^2  =$$
$$ = \frac{1}{4}\left|\frac{\de v}{\de x_1}\right|^2 + (1-t)^{1/2}\left|\frac{\de v}{\de t}\right|^2 .$$
Since
$$\frac{\de\uptau}{\de x_1} = -\frac{1}{2}\uptau^3\frac{\de(\uptau^{-2})}{\de x_1} = -\frac{1}{2}\frac{\uptau^3}{t^2}\frac{\de}{\de x_1}(x_1^4+t^2)=-2tx_1^3;$$
$$\frac{\de l}{\de x_1} = \frac{1}{n}\frac{\de \log\rho}{\de x_1}= \frac{1}{4n}\frac{\de\rho^4}{\de x_1}= \frac{1}{n}x_1^3;$$
$$\frac{\de\uptau}{\de t} = \frac{1}{2}\uptau^{-1}\frac{\de(\uptau^2)}{\de t} = \frac{1}{2}t^{-1}(2t-2t^3)=(1-t^2);$$
$$\frac{\de l}{\de t} = \frac{1}{4n}\frac{\de\rho^4}{\de t}=\frac{1}{2n}t,$$
we obtain
$$|\widetilde{\nabla}v|_{\widetilde{\theta}}^2 = \frac{1}{4}\left|\frac{\de v}{\de x_1}\right|^2 + (1-t^2)^{1/2}\left|\frac{\de v}{\de t}\right|^2 =$$
$$ =  \frac{1}{4}\left|-2tx_1^3\frac{\de v}{\de\uptau} + \frac{x_1^3}{n} \frac{\de v}{\de l}\right|^2 +
(1-t^2)^{1/2}\left||x|^4\frac{\de v}{\de\uptau}+\frac{t}{2n}\frac{\de v}{\de l}\right|^2 =$$
$$= t^2(1-t^2)^{3/2}\left|\frac{\de v}{\de\uptau}\right|^2+ \frac{1}{4n^2}(1-t^2)^{3/2} \left|\frac{\de v}{\de l}\right|^2-\frac{1}{n}t(1-t^2)^{3/2}\frac{\de v}{\de\uptau}\frac{\de v}{\de l}+ $$
$$+ (1-t^2)^{5/2}\left|\frac{\de v}{\de\uptau}\right|^2 + \frac{1}{4n^2}t^2(1-t^2)^{1/2}\left|\frac{\de v}{\de l}\right|^2 + \frac{1}{n}(1-t^2)^{3/2}t\frac{\de v}{\de\uptau}\frac{\de v}{\de l} =$$
$$= (1-t^2)^{3/2}\left|\frac{\de v}{\de\uptau}\right|^2+ \frac{1}{4n^2}(1-t^2)^{1/2}\left|\frac{\de v}{\de l}\right|^2.$$
By the dilation invariance of $\widetilde{\theta}$ we obtain the formula in the general case
\end{proof}

Now we compute the volume form.

\begin{lemma}\label{FormaDiVolume}
 In the coordinates of Lemma \ref{coordinate}
 $$\widetilde{\theta}\wedge(d\widetilde{\theta})^n = 2^nn!(1-\uptau^2)^{(n-2)/2}dl\wedge d\gamma\wedge d\uptau.$$
\end{lemma}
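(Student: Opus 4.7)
The plan is a straightforward sequence of substitutions, built on two ingredients already available. The first is the conformal invariance of the top form up to a power of $\rho$: differentiating $\widetilde\theta=\rho^{-2}\theta$ gives $d\widetilde\theta=\rho^{-2}d\theta-2\rho^{-3}d\rho\wedge\theta$, and when one expands $(d\widetilde\theta)^n$ every summand except $\rho^{-2n}(d\theta)^n$ carries a factor of $\theta$. Wedging against $\widetilde\theta=\rho^{-2}\theta$ kills all such summands via $\theta\wedge\theta=0$, leaving
$$\widetilde\theta\wedge(d\widetilde\theta)^n=\rho^{-2(n+1)}\,\theta\wedge(d\theta)^n=\rho^{-Q}\,\theta\wedge(d\theta)^n.$$
The second ingredient is the identity $\theta\wedge(d\theta)^n=2^nn!\,\vol_{g_\theta}$ already recorded in Section \ref{Curvatura}; since on $\H^n$ the Riemannian volume form is a constant multiple of the Lebesgue measure $dx\wedge dt$, the problem reduces to expressing $\rho^{-Q}\,dx\wedge dt$ in the new coordinates.

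For that, I would first go to polar coordinates on $\C^n\cong\R^{2n}$: with $r=|x|$ and $\gamma=x/r\in{\bf S}^{2n-1}$ one has $dx\wedge dt=r^{2n-1}\,dr\wedge d\gamma\wedge dt$. Inverting the relations $\rho^4=r^4+t^2$ and $\uptau=t/\rho^2$ gives
$$r=\rho(1-\uptau^2)^{1/4},\qquad t=\uptau\rho^2,\qquad \rho=e^{nl},\qquad d\rho=n\rho\,dl.$$
A direct two-dimensional Jacobian computation then yields
$$dr\wedge dt=\rho^2(1-\uptau^2)^{-3/4}\,d\rho\wedge d\uptau,$$
once the two non-vanishing summands $\rho^2(1-\uptau^2)^{1/4}$ and $\rho^2\uptau^2(1-\uptau^2)^{-3/4}$ collapse through the identity $(1-\uptau^2)+\uptau^2=1$. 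Multiplying by $r^{2n-1}=\rho^{2n-1}(1-\uptau^2)^{(2n-1)/4}$ and converting $d\rho$ to $dl$, the powers of $\rho$ combine to $\rho^{2n+2}=\rho^{Q}$, which cancels exactly the $\rho^{-Q}$ coming from the conformal change, while the powers of $(1-\uptau^2)$ simplify to $(1-\uptau^2)^{(n-2)/2}$.

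The proof is entirely elementary and the only conceptual input is the observation that $\widetilde\theta\wedge(d\widetilde\theta)^n$ scales like $\rho^{-Q}$; everything else is Jacobian bookkeeping. I therefore foresee no genuine obstacle. The one place where care is required is the tracking of the overall numerical constant, which must be reconciled with the various normalisations in play (the chosen $\theta$ on $\H^n$, the relation between $\vol_{g_\theta}$ and Lebesgue measure, and the convention for the surface measure $d\gamma$ on ${\bf S}^{2n-1}$); as a sanity check I would verify the coefficient $2^nn!$ directly in the model case $n=1$ before committing to the general formula.
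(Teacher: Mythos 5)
Your proof is correct and follows essentially the same route as the paper: reduce to $\rho^{-2(n+1)}\theta\wedge(d\theta)^n = \rho^{-Q}\,2^nn!\,|x|^{2n-1}d|x|\wedge d\gamma\wedge dt$ and then compute the two-dimensional Jacobian from $(|x|,t)$ to $(l,\uptau)$, with the cross terms collapsing via $(1-\uptau^2)+\uptau^2=1$. Your flagged constant check is well taken: the substitution $d\rho=n\rho\,dl$ produces an overall factor of $n$ which the paper's own displayed formula for $d|x|\wedge dt$ silently drops, so your bookkeeping and the paper's differ by exactly that (harmless, since it rescales the whole functional) multiplicative constant.
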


\begin{proof} The volume form becomes
$$\widetilde{\theta}\wedge(d\widetilde{\theta})^n = \rho^{-2(n+1)} \theta\wedge(d \theta)^n = \frac{2^nn!}{\rho^{2(n+1)}}\vol_{g_{\theta}}=\frac{2^nn!}{\rho^{2(n+1)}}|x|^{2n-1}d|x|\wedge d\gamma\wedge dt.$$
By an easy computation
$$ d|x| = \frac{1}{4|x|^3} d(\rho^4-t^2)=  \frac{1}{4|x|^3}d(e^{4ln}(1-\uptau^2))=$$
$$=\frac{1}{4(1-\uptau^2)^{3/4}e^{3nl}}e^{4ln}(4n(1-\uptau^2)dl - 2\uptau d\uptau)=$$
$$= e^{ln}\left(n(1-\uptau^2)^{1/4}dl - \frac{\uptau}{2(1-\uptau^2)^{3/4}}d\uptau\right);$$
$$ dt = d(e^{2nl}\uptau) = e^{2nl}(2n\uptau dl + d\uptau);$$
$$d|x|\wedge dt = e^{3nl}\frac {1}{(1-\uptau^2)^{3/4}} dl\wedge d\uptau,$$
so
$$\widetilde{\theta}\wedge(d\widetilde{\theta})^n = -\frac{2^nn!}{\rho^{2(n+1)}}(1-\uptau^2)^{(2n-1)/4}\rho^{2n-1}d|x|\wedge dt\wedge d\gamma=$$
$$= -\frac{2^nn!}{e^{3nl}}(1-\uptau^2)^{(2n-1)/4}e^{3nl}\frac {1}{(1-\uptau^2)^{3/4}}dl\wedge d\uptau\wedge d\gamma=$$
$$=2^nn!(1-\uptau^2)^{(n-2)/2}dl\wedge d\gamma\wedge d\uptau,$$
as desired.
\end{proof}

Using Lemmas \ref{coordinate} and \ref{FormaDiVolume} we have that
$$\widetilde{I}(v) = b_n\int_{A_r}\left((1-\uptau^2)^{3/2}\left|\frac{\de v}{\de\uptau}\right|^2+ \frac{1}{4n^2}(1-\uptau^2)^{1/2}\left|\frac{\de v}{\de l}\right|^2v^2 +\right.$$
$$ \left.+ n^2 (1-\uptau^2)^{1/2}\right)2^nn!(1-\uptau^2)^{(n-2)/2}dl\wedge d\gamma\wedge d\uptau+$$
$$- \frac{n}{n+1}\int_{A_r} v^{2+2/n}2^nn!(1-\uptau^2)^{(n-2)/2}dl\wedge d\gamma\wedge d\uptau =$$
$$=b_n2^nn!\int_0^{\frac{\log r}{n}}\int_{-1}^1\left((1-\uptau^2)^{(n+1)/2}\left|\frac{\de v}{\de\uptau}\right|^2 + \frac{1}{4n^2}(1-\uptau^2)^{(n-1)/2}\left|\frac{\de v}{\de l}\right|^2v^2+ \right. $$
$$ \left. + n^2 (1-\uptau^2)^{(n-1)/2}\right)dl\wedge d\uptau- \frac{n2^nn!}{n+1}\int_0^{\frac{\log r}{n}}\int_{-1}^1 v^{2+2/n}(1-\uptau^2)^{(n-2)/2}dl\wedge d\uptau.$$
If $\uptau=\sin s$, then
$$\widetilde{I}(v) = b_n2^nn!\int_0^{\frac{\log r}{n}}\int_{-\frac{\pi}{2}}^{\frac{\pi}{2}} \left(\frac{(\cos s)^{n+1}}{(\cos s)^2}\left|\frac{\de v}{\de s}\right|^2 + \frac{1}{4n^2}(\cos s)^{n-1}\left|\frac{\de v}{\de l}\right|^2+ \right. $$
$$ \left.+ n^2 (\cos s)^{n-1}v^2\right)dl (\cos s)ds- \frac{n2^nn!}{n+1}\int_0^{\frac{\log r}{n}}\int_{-\frac{\pi}{2}}^{\frac{\pi}{2}} v^{2+2/n}(\cos s)^{n-2}dl(\cos s)ds =$$
$$= b_n2^nn!\int_0^{\frac{\log r}{n}}\int_{-\frac{\pi}{2}}^{\frac{\pi}{2}}(\cos s)^n \left(\left|\frac{\de v}{\de s}\right|^2+ \frac{1}{4n^2}\left|\frac{\de v}{\de l}\right|^2+n^2v^2 \right)dlds+$$
$$-\frac{n2^nn!}{n+1}\int_0^{\frac{\log r}{n}}\int_{-\frac{\pi}{2}}^{\frac{\pi}{2}} v^{2+2/n}(\cos s)^{n-1}dlds.$$
Now let us look for homogeneous solutions. Homogeous solutions in the original setting correspond to solutions invariant by
translation (in the $l$ direction), and so let us set $\frac{\de v}{\de l}=0$, and $v=v(s)$. In this special case we have
$$\widetilde{I}(v)= b_n2^nn!\frac{\log r}{n}\int_{-\frac{\pi}{2}}^{\frac{\pi}{2}}(\cos s)^n \left((v')^2+ n^2v^2 \right)ds+$$
$$-\frac{n2^nn!}{n+1}\frac{\log r}{n}\int_{-\frac{\pi}{2}}^{\frac{\pi}{2}} v^{2+2/n}(\cos s)^{n-1}ds.$$
The Euler-Lagrange equation for this functional is
$$- \frac{d}{ds}((\cos s)^nv'(s)) + n^2(\cos s)^nv(s)= \frac{n}{2(n+1)}(\cos s)^{n-1}v(s)^{1+2/n},$$
or equivalently
$$- \cos s v''(s) + n \sin s v'(s) +n^2 \cos s v(s) =  \frac{n}{2(n+1)}v(s)^{1+2/n},$$
on the interval $\left(-\frac{\pi}{2},\frac{\pi}{2}\right)$,
with Neumann boundary conditions, that is also the Euler-Lagrange equation (up to rescaling, thanks to homogeneity) of
$$J(v)=\frac{\int_{-\frac{\pi}{2}}^{\frac{\pi}{2}}(\cos s)^n \left((v')^2+ n^2v^2 \right)ds}{\int_{-\frac{\pi}{2}}^{\frac{\pi}{2}} v^{2+2/n}(\cos s)^{n-1}ds}.$$
Let us define the weighted Sobolev and Lebesgue spaces
$$X=\left\{ u\in H^1_{\rm{loc}}\left(-\frac{\pi}{2},\frac{\pi}{2}\right) \;\;\middle| \;\; \int_{-\frac{\pi}{2}}^{\frac{\pi}{2}}(\cos s)^n \left((v')^2+v^2 \right)ds<\infty  \right\},$$
$$ Y = \left\{ u\in L^1_{\rm{loc}}\left(-\frac{\pi}{2},\frac{\pi}{2}\right) \;\;\middle| \;\; \int_{-\frac{\pi}{2}}^{\frac{\pi}{2}}(\cos s)^{n-1} v^{2+2/n}ds<\infty  \right\}.$$

\begin{proposizione}
 $X$ embeds compactly in $Y$.
\end{proposizione}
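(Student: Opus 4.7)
The plan is to combine an interior Rellich compactness argument with a uniform tail estimate at the endpoints $s=\pm\pi/2$, where the weights degenerate.

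First, for any $\delta\in(0,\pi/4)$, the weights $(\cos s)^n$ and $(\cos s)^{n-1}$ are bounded above and below by positive constants on $I_\delta:=[-\pi/2+\delta,\pi/2-\delta]$. Hence on $I_\delta$ the $X$-norm is equivalent to the standard $H^1(I_\delta)$-norm, and the $Y$-norm is equivalent to $L^{2+2/n}(I_\delta)$. The classical one-dimensional Rellich-Kondrachov theorem gives compactness of $H^1(I_\delta)\hookrightarrow L^{2+2/n}(I_\delta)$, so from any bounded sequence $(v_k)\subset X$ a diagonal extraction over $\delta=\delta_j\to 0$ produces a subsequence that is Cauchy in $Y$ when restricted to every $I_\delta$.

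The technical heart of the proof is the uniform tail estimate
$$\int_{\pi/2-\delta}^{\pi/2}(\cos s)^{n-1}\,|v|^{2+2/n}\,ds\ \leq\ \omega(\delta)\,\|v\|_X^{2+2/n},\qquad \omega(\delta)\to 0\ \text{as}\ \delta\to 0,$$
and the symmetric one at $-\pi/2$. To establish it, I would first select a reference point $s_0\in[-\pi/4,\pi/4]$ satisfying $|v(s_0)|\le C\|v\|_X$, which exists by the mean-value theorem for integrals applied to the bound $\int_{-\pi/4}^{\pi/4}v^2\,ds\le 2^{n/2}\|v\|_X^2$. Writing $v(s)=v(s_0)+\int_{s_0}^{s}v'(\sigma)\,d\sigma$ and applying weighted Cauchy-Schwarz,
$$\left|\int_{s_0}^{s}v'(\sigma)\,d\sigma\right|^2\ \leq\ \left(\int_{s_0}^{s}(\cos\sigma)^{-n}\,d\sigma\right)\|v\|_X^2,$$
yields the pointwise bound $|v(s)|^2\le C\|v\|_X^2\bigl(1+\Phi_n(\cos s)\bigr)$, with $\Phi_n(t)=t^{1-n}$ for $n\ge 2$ and $\Phi_1(t)=|\log t|$. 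Raising this to the power $1+1/n$ and inserting it into the tail integral produces a one-dimensional integral with exponent $(n-1)+(1-n)(n+1)/n=(1-n)/n$, which is always strictly greater than $-1$; the integral evaluates to $\omega(\delta)=O(\delta^{1/n})$ for $n\ge 2$ (and $O(\delta|\log\delta|^{2})$ for $n=1$).

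Combining the Cauchy-on-$I_\delta$ property with uniform smallness of the tails gives a sequence which is Cauchy in $Y$, and completeness yields the desired convergent subsequence. The main obstacle is establishing the pointwise bound together with the careful bookkeeping of exponents; the fact that the resulting tail exponent is strictly above the integrability threshold is precisely the expression of the subcriticality of $2+2/n$ for the weighted Sobolev embedding $X\hookrightarrow Y$, which is what separates compactness from a merely continuous embedding.
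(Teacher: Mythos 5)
Your proof is correct, but it takes a genuinely different route from the paper. The paper identifies $X$ isometrically with the rotationally invariant subspace of $H^1(S^{n+1})$ (the weight $(\cos s)^n$ being the Jacobian of the spherical coordinate), invokes Rellich--Kondrachov on the sphere to get compactness of $X\hookrightarrow L^p((\cos s)^n\,ds)$ for $p<2\frac{n+1}{n-1}$, and then absorbs the weight mismatch between $(\cos s)^{n-1}$ and $(\cos s)^n$ by a H\"older inequality with a small auxiliary exponent $\alpha$, so that $Y$ receives $L^p((\cos s)^n\,ds)$ continuously for a suitable subcritical $p>2+2/n$. You instead argue entirely in one dimension: classical Rellich on interior intervals where the weights are nondegenerate, plus a uniform tail estimate obtained from a Strauss-type pointwise bound $|v(s)|^2\le C\|v\|_X^2\bigl(1+(\cos s)^{1-n}\bigr)$ (logarithmic for $n=1$), with the exponent bookkeeping landing at $(1-n)/n>-1$ so the tails are uniformly small. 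Both arguments are sound; your computation of the tail exponent and of the rates $O(\delta^{1/n})$, $O(\delta|\log\delta|^2)$ checks out, and the diagonal extraction plus uniform tails is a standard and valid way to conclude. What each buys: the paper's route is shorter once the spherical Rellich--Kondrachov theorem is granted and, via the H\"older trick, reaches exponents up to nearly $2\frac{n+1}{n-1}$; yours is self-contained and elementary, makes the endpoint degeneration fully quantitative, but the crude pointwise bound limits the method to exponents $p<\frac{2n}{n-1}$ --- still comfortably above the needed $2+\frac{2}{n}$. Two small points you should add for completeness: the constant term in the pointwise bound contributes $O(\delta^{n})$ to the tail, which is harmless, and the existence of the reference point $s_0$ uses that functions in $H^1_{\mathrm{loc}}$ are continuous in one dimension.
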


\begin{proof}
Let $Z$ be the subspace of $H^1(S^{n+1})$ formed by functions invariant by rotation around the last coordinate axis.
Every such function is of the form $v(x) = u(\cos x^{n+2})$, and it is easy to verify thar under such an identification $$\N{v}_Z  = \N{u}_X.$$

So this is an isometric isomorphism between X and Z. By Rellich-Kondrachov's theorem, Z embeds compactly into $L^p(S^n)$ for every $p\in[1, 2\frac{n+1}{n−1})$,
which by similar arguments is isometrically isomorphic to $L^p\left((\left( -\frac{\pi}{2},\frac{\pi}{2}\right),(\cos s)^nds\right)$.
Given $\alfa > 0$, $q > 1$,
$$\int_{-\frac{\pi}{2}}^{-\frac{\pi}{2}} (\cos s)^{n−1}v^{2+2/n}ds = \int_{-\frac{\pi}{2}}^{-\frac{\pi}{2}} \frac{(\cos s)^{n−1+\alfa}}{(\cos s)^{\alfa}}ds\le$$
$$\le \left(\int_{-\frac{\pi}{2}}^{-\frac{\pi}{2}} v^{(2+2/n)q} (\cos s)(n−1+\alfa)q ds\right)^{1/q}\left(\int_{-\frac{\pi}{2}}^{-\frac{\pi}{2}} (\cos s)^{−\alfa q'}ds\right)^{1/q'}.$$
If we impose that $(n − 1 + \alfa)q = n$, then taking $\alfa$ small enough, we can find
that $p = \left(2 + \frac{2}{n}q\right) < 2\frac{n+1}{n−1}$ and $\alfa q' < n$, getting that
$$\int_{-\frac{\pi}{2}}^{-\frac{\pi}{2}} (\cos s)^{n−1}v^{2+2/n}ds \le  C \int_{-\frac{\pi}{2}}^{-\frac{\pi}{2}}\left( v^p (\cos s)^nds\right)^{1/q},$$
that is
$$\N{v}_Y \le C \N{v}_{L^p((\cos s)^n ds)},$$
and so $L^p\left(\left(\frac{\pi}{2},\frac{\pi}{2}\right) (\cos s)^n ds\right)$ embeds into $Y$.
So we get the thesis.
\end{proof}

Now Theorem \ref{Teorema} can be proved, in a standard way, by the direct methods of the calculus of variation.
Since the solution does not depend on $r$, by homogeneity this defines a solution on the whole $\H^n\setminus\{0\}$.

\section{Proof of Theorem \ref{Teorema2}}\label{Biforcazione}
Let $\O_t=\{1\le|x|\le T\}$ be a cylinder in the Heisenberg group.
In the following, all integrals are meant with respect to the Haar measure and volume elements will be omitted.

Let
$$\ci{J}_T(u)=\int_{\Omega_T}\left(|\grad_{\H^n} u|^2-\frac{1}{2^*}|u|^{2^*}\right)$$
be defined on the space
$$X_T=\left\{u\in S^1_{\rm{loc}}(\H^n) \;|\; u\circ \delta_T = T^{-\frac{Q-2}{2}}u\right\},$$
where $S^1_{\rm{loc}}(\H^n)$ is the Stein-Folland space (see \cite{Folland}).
Let $\Psi$ be the homogeneous solution of the PDE
$$-\D u = u^{2^*-1}$$
found in the previous section, that is, a stationary point of $\ci{J}_T$.
 
\begin{proposizione}\label{IndiceDiMorse}
 The Morse index of $\ci{J}_T$ at $\Psi$ is finite and tends to infinity as $T\to\infty$.
\end{proposizione}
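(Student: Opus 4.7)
The plan is to diagonalise $d^2\ci{J}_T(\Psi)$ through Fourier series in $l$ and spherical-harmonic expansion on $S^{2n-1}$, after pulling the problem back to the compact cylinder $[0,L]\times(-\pi/2,\pi/2)\times S^{2n-1}$ (where $L=(\log T)/n$) via the conformal change $\theta\mapsto\rho^{-2}\theta$ and the coordinates $(l,s,\gamma)$ of Section \ref{SoluzioneSingolare}. Under this reduction $X_T$ becomes the space of functions $L$-periodic in $l$, and $\Psi$ is replaced by a function depending only on $s$, so the coefficients of the linearised operator are $l$- and $\gamma$-independent.

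In these coordinates the second variation of the transformed functional takes the schematic form
$$Q(\phi)=\int_0^L\!\!\int_{-\pi/2}^{\pi/2}\!\!\int_{S^{2n-1}}\Big[A(s)\phi_s^2+B(s)\phi_l^2+C(s)|\grad_\gamma\phi|^2+(D(s)-V(s))\phi^2\Big]\,d\gamma\,ds\,dl,$$
where $A,B,C,D$ are positive and carry the $(\cos s)^n$ or $(\cos s)^{n-1}$ weights of Lemmas \ref{coordinate}--\ref{FormaDiVolume}, while $V(s)=\tfrac{n+2}{2(n+1)}(\cos s)^{n-1}\Psi(s)^{2/n}$ is the linearised nonlinear term. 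Expanding $\phi(l,s,\gamma)=\sum_{k\in\mathbf{Z},\,j\ge 0}\phi_{k,j}(s)e^{2\pi ikl/L}Y_j(\gamma)$ decouples $Q$ into an orthogonal sum $L\sum_{k,j}\bra L_{k,j}\phi_{k,j},\phi_{k,j}\ket$, with
$$L_{k,j}\psi=-(A\psi')'+\Big(D-V+\tfrac{4\pi^2k^2}{L^2}B+\lambda_j C\Big)\psi,$$
where $\lambda_j$ are the eigenvalues of $-\D_{S^{2n-1}}$. The Morse index of $\ci{J}_T$ at $\Psi$ is then $\sum_{k,j}m_{k,j}$, where $m_{k,j}$ is the number of strictly negative eigenvalues of $L_{k,j}$.

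For the finiteness claim I would observe that each $L_{k,j}$, set on the weighted Sobolev space whose compact embedding was established in Section \ref{SoluzioneSingolare}, is self-adjoint with compact resolvent and hence has only finitely many negative eigenvalues; moreover, as soon as $4\pi^2 k^2/L^2+\lambda_j$ exceeds an explicit constant depending on $\|V\|_\infty$ and positive lower bounds for $B,C$ away from the endpoints, $L_{k,j}$ is nonnegative and $m_{k,j}=0$. So only finitely many modes contribute.

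For the divergence as $T\to\infty$, the crucial input is that $L_{0,0}$ has a strictly negative principal eigenvalue. Testing its quadratic form against $\psi=\Psi$ and using the Euler-Lagrange equation of Section \ref{SoluzioneSingolare} (multiply by $\Psi$, integrate, and regroup the two copies of $\int(\cos s)^{n-1}\Psi^{2+2/n}$ coming from the linear and nonlinear sides), one finds
$$\bra L_{0,0}\Psi,\Psi\ket=-\frac{1}{n+1}\int_{-\pi/2}^{\pi/2}(\cos s)^{n-1}\Psi^{2+2/n}\,ds<0.$$
By the min-max principle, the principal eigenvalue $\mu_1(L_{k,0})$ is monotone nondecreasing in $k^2/L^2$, so it remains negative for every $k\in\mathbf{Z}$ with $4\pi^2k^2/L^2$ smaller than an explicit positive threshold, i.e.\ for all $|k|\lesssim L$. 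Hence $\sum_k m_{k,0}$ grows at least linearly in $L$, i.e.\ linearly in $\log T$, and the Morse index diverges. The main technical obstacle is extending the compact embedding of Section \ref{SoluzioneSingolare} to the full product setting with spherical harmonics and Fourier modes, which however follows from essentially the same weighted Rellich-Kondrachov argument.
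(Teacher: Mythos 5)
Your argument for the divergence of the index is, after unwinding the conformal change, the same as the paper's: with $l=\frac1n\log\rho$ and $L=\frac{\log T}{n}$, your test functions $e^{2\pi ikl/L}\Psi$ are exactly the paper's $u_m(x)=\exp\bigl(i\tfrac{2\pi m}{\log T}\log|x|\bigr)\Psi(x)$; your identity $\bra L_{0,0}\Psi,\Psi\ket<0$ obtained from the Euler--Lagrange equation is the paper's computation $\int\bigl(|\grad\Psi|^2-(2^*-1)\Psi^{2^*}\bigr)=-(2^*-2)\int\Psi^{2^*}$; your $O(k^2/L^2)$ perturbation of the zero mode is the paper's $\tfrac{1}{M^2|x|^2}\Psi^2$ term; and the orthogonality of distinct Fourier modes is the paper's vanishing of $d^2\ci{J}_T(\Psi)[u_m,u_j]$ for $m\ne j$. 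Both versions produce a negative subspace of dimension growing linearly in $\log T$, so for this half the two proofs are equivalent. (One small point: monotonicity of $\mu_1(L_{k,0})$ in $k^2/L^2$ by min--max goes in the unhelpful direction on its own; what carries the argument is the explicit evaluation $\bra L_{k,0}\Psi,\Psi\ket=\bra L_{0,0}\Psi,\Psi\ket+\tfrac{4\pi^2k^2}{L^2}\int B\Psi^2$, which you do have.)

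The gap is in the finiteness half. The claimed decoupling of $d^2\ci{J}_T(\Psi)$ into operators $L_{k,j}$ indexed by round spherical harmonics is not valid on the full space $X_T$: the horizontal fields $X_\alpha=\de_{x_\alpha}+2y_\alpha\de_t$, $Y_\alpha=\de_{y_\alpha}-2x_\alpha\de_t$ produce in $|\grad_{\H^n}\phi|^2$ cross terms of the form $\de_t\phi\,(y\per\nabla_x\phi-x\per\nabla_y\phi)$, which couple the Hopf-type rotation generator on $S^{2n-1}$ to the $(l,\uptau)$ derivatives and do not integrate to zero for general $\phi$; moreover the angular operator is not $-\D_{S^{2n-1}}$, so the form is not an orthogonal sum over your $(k,j)$ modes. (Lemma \ref{coordinate} is stated, and only valid, for $U(n)$-invariant $v=v(|x|,t)$ --- which is all the lower bound needs.) Even within a single mode, the proposed pointwise absorption of $V$ into $\tfrac{4\pi^2k^2}{L^2}B+\lambda_jC$ fails near $s=\pm\pi/2$, since $V\sim(\cos s)^{n-1}$ while $B,C,D\sim(\cos s)^{n}$, so $V/B$ blows up at the endpoints; one would need the weighted embedding of Section \ref{SoluzioneSingolare} to control $\int V\phi^2$ by $\e\int A(\phi')^2+C_\e\int D\phi^2$. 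Both difficulties are avoided by the paper's one-line argument: $d^2\ci{J}_T(\Psi)$ is the identity plus a compact operator on $X_T$ (Rellich--Kondrachov for the Folland--Stein space $S^1$), hence has finitely many negative eigenvalues. You should replace your finiteness argument by this, or restrict your mode decomposition to the $U(n)$-invariant sector where it is actually justified and use it only for the lower bound.
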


\begin{proof}
 The Morse index is finite because the operator on $X_T$ associated to the bilinear form
 $$d^2\ci{J}_T(\Psi)[u,v] = \int_{\Omega_T}\left(\grad u\grad v -(2^*-1)\Psi^{2^*-2}uv\right)$$
is of the sum of the identity and a compact operator (thanks to the Rellich-Kondrachov theorem for the Stein-Folland space).
 
 The signature of a simmetric bilinear form remains invariant passing to the complexification and extending it to a hermitian form.
 So let us take
 $$u(x)=\exp\left(i\frac{1}{M}\log|x|\right)\Psi(x),$$
 where 
 $$\frac{\log T}{2\pi M} \in \mathbf{Z}.$$
 Then
 $$d^2\ci{J}_T(\Psi)[u,u] = \int_{\Omega_T}\left(\left|\grad\left(\exp\left(i\frac{1}{M}\log|x|\right)\Psi\right)\right|^2-(2^*-1)\Psi^{2^*-2}\Psi^2\right)=$$
 $$= \int_{\Omega_T}\left(\frac{1}{M^2|x|^2}\Psi^2 +|\grad\Psi|^2 +2\frac{1}{M|x|}\Psi\grad\Psi\per\grad|x| -(2^*-1)\Psi^{2^*}\right)=$$
 $$= \int_{\Omega_T}\left(\frac{1}{M^2|x|^2}\Psi^2 +2\frac{1}{M|x|}\Psi\grad\Psi\per\grad|x| -(2^*-2)\Psi^{2^*}\right)=$$
 $$= -(2^*-2)\int_{\Omega_T}\Psi^{2^*} + \int_{\Omega_T}\left(\frac{1}{M^2|x|^2}\Psi^2 +2\frac{1}{M|x|}\Psi\grad\Psi\per\grad|x|\right).$$
 By homogeneity the three integrals
 $$\int_{\Omega_T}\Psi^{2^*}, \;\; \int_{\Omega_T}\frac{1}{|x|^2}\Psi^2, \;\; \int_{\Omega_T}\frac{1}{|x|}\Psi\grad\Psi\per\grad|x|$$
 are constant multiples of $\log T$, so there exists a constant $C$ such that if $M\ge C$ then
 $d^2\ci{J}_T(\Psi)[u,u]$ is negative.
 Given $k\in\mathbf{N}$, let $T$ be big enough so that
 $$\frac{2\pi k}{\log T} \le \frac{1}{C}.$$
 Then the functions
 $$u_m(x)=\exp\left(i\frac{2\pi m}{\log T}\log|x|\right)\Psi(x)$$
 with $m=1,\ldots,k$, are such that
 $d^2\ci{J}_T(\Psi)[u_m,u_m] \le -\e \log T$ is negative.
 If $f$ is a homogeneous function of degree zero and $m\ne 0$ then
 $$\int_{\Omega_T}\exp\left(i\frac{2\pi m}{\log T}\log|x|\right)\frac{f(x)}{|x|^Q} = \int_{S_1}d\sigma f\int_1^Tdr\frac{\exp\left(i\frac{2\pi m}{\log T}\log r\right)}{r} = 0.$$
 When calculating
 $$d^2\ci{J}_T(\Psi)[u_m,u_j]$$
 with $m\ne j$, the result is a sum of terms of this kind, so it is zero.
 So the functions $u_m$ span a vector space of dimension $k$ on which $d^2\ci{J}_T(\Psi)$ is negative definite.
\end{proof}

In order to apply bifurcation theory, let us rewrite the functional with respect to the pseudohermitian form given by the conformal change corresponding to $\Psi$.
So we get the functional
$$\widetilde{\ci{J}}_T(u)=\int_{\Omega_T}\left(|\widetilde{\grad}_{\H^n} u|^2+\frac{1}{2}u^2-\frac{1}{2^*}|u|^{2^*}\right),$$
defined on the space
$$Y_T=\left\{u\in S^1_{\rm{loc}}(\H^n) \;|\; u\circ \delta_T = u, \;\; \int_{\O_T} u = 0\right\}.$$
Let $\Sigma$ be the sphere with respect to the Euclidean metric
\footnote{this is necessary to perform the next steps of the proofs because the sphere with respect to the Heisenberg metric is not smooth}.
Let $\phi_k$ be a complete set in $L^2(\Sigma)$ consisting of analytic functions. So
$$\gamma_{k,m,T}(x) = \phi_k\left(\frac{x}{|x|_{\rm{eucl}}}\right)\sin\left(i\frac{2\pi m}{\log T}\log|x|_{\rm{eucl}}\right)$$
is a complete set of functions in $H^1(\O_T)$, analytic with respect to the couple $(x,T)$. SO it is complete also in $S^1(\O_T)$.
With the Gram-Schmidt algorithm, we can obtain a family of Hilbert bases $\psi_{k,T}$ of $S^1(\O_T)$, and preserve the analyticity property.
Let us define the isometry $\Psi_T$ between $Y_T$ and $Y_2$ obtained sending $\psi_{k,T}$ into $\psi_{k,2}$.
Let us call
$$L_T  = \Psi_T\circ \widetilde{\ci{J}}''_T(1) \circ \Psi_T^{-1}.$$
Then, for every $l,k$,
$$\bra L_T u_{2,k}, u_{2,k}\ket_{Y_2} = \bra\widetilde{\ci{J}}''_T(1) u_{k,T}, u_{l,T}\ket_{Y_T} =$$
$$= \int_{\O_T}\widetilde{\grad} u_{k,T}\widetilde{\grad} u_{l,T}+ u_{k,T}u_{l,T} - (2^*-1)u_{k,T}u_{l,T}.$$

It is an analytic function by the following lemma of immediate proof.

\begin{lemma}
 If $S:I\to X$, $T: I\to Y$ are two analytic vector valued functions and $L:X\times Y\to Z$ is a bilinear continuous form, then
 $t\mapsto B(S(t),T(t))$ is analytic.
\end{lemma}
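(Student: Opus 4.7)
The plan is straightforward, since analyticity is a local property. I would fix an arbitrary $t_0 \in I$ and use the local power series expansions $S(t) = \sum_{n\ge 0} s_n (t-t_0)^n$ with $s_n \in X$ and $T(t) = \sum_{n\ge 0} \tau_n (t-t_0)^n$ with $\tau_n \in Y$, each convergent absolutely in its respective norm on some interval $(t_0 - r, t_0 + r) \subseteq I$.

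Next, I would form the formal Cauchy product $\sum_{n\ge 0} c_n (t-t_0)^n$ with $c_n := \sum_{k=0}^n B(s_k, \tau_{n-k}) \in Z$, which is the natural candidate from bilinearity applied to finite partial sums. The continuity of $B$ gives a constant $C$ with $\N{B(x,y)}_Z \le C \N{x}_X \N{y}_Y$, so $\N{c_n}_Z \le C \sum_{k=0}^n \N{s_k}_X \N{\tau_{n-k}}_Y$. For any $\rho < r$ the scalar series $\sum_n \N{s_n}_X \rho^n$ and $\sum_n \N{\tau_n}_Y \rho^n$ converge, and by the standard Mertens argument so does their Cauchy product, which dominates $\sum_n \N{c_n}_Z \rho^n$. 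Hence the $Z$-valued series converges absolutely on $(t_0-\rho, t_0+\rho)$.

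Finally, I would identify its sum with $B(S(t),T(t))$. Denoting by $S_N(t), T_N(t)$ the $N$th partial sums of $S$ and $T$, bilinearity of $B$ gives
$$B(S_N(t),T_N(t)) = \sum_{k,j \le N} B(s_k,\tau_j)(t-t_0)^{k+j},$$
which differs from $\sum_{n \le N} c_n (t-t_0)^n$ only by terms whose $Z$-norm is bounded by the tails of the dominating scalar Cauchy product. Letting $N \to \infty$ and using continuity of $B$ in each variable separately yields $B(S(t),T(t)) = \sum_{n\ge 0} c_n (t-t_0)^n$. Since $t_0 \in I$ was arbitrary, $t \mapsto B(S(t),T(t))$ is analytic.

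There is no real obstacle here; as the author remarks, the proof is immediate once one writes down the Cauchy product and invokes the operator-norm bound for $B$. The only mild care needed is in controlling the difference between $B(S_N,T_N)$ and the partial sum of the product series, which is handled by the same scalar majorant.
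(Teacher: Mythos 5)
Your Cauchy-product argument is correct and complete; the only point worth noting is that the paper declares this lemma to be ``of immediate proof'' and supplies no argument at all, so what you have written is precisely the standard filling-in one would expect. The one slightly loose phrase is ``continuity of $B$ in each variable separately'' at the end: what you actually use (and what the bound $\N{B(x,y)}_Z\le C\N{x}_X\N{y}_Y$ gives you) is the joint estimate, applied via $B(S_N,T_N)-B(S,T)=B(S_N-S,T_N)+B(S,T_N-T)$, which is fine.
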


So $L_T$ is an analytic operator-valued function.

It holds that
$$\bra\widetilde{\ci{J}}''_T(1) u,v\ket = \int_{\O_T}\widetilde{\grad} u\widetilde{\grad} v+ uv - (2^*-1)uv = \int_{\O_T}\widetilde{\grad} u\widetilde{\grad} v - (2^*-2)\D(G_Tu)v=$$
$$= \int_{\O_T}\widetilde{\grad} u\widetilde{\grad} v + (2^*-2)\widetilde{\grad}(G_Tu)\widetilde{\grad} v,$$
where $G_T:Y_T\to Y_T$ is the Green's operator, so
$$\widetilde{\ci{J}}''_T(1)= I + (2^*-2)G_T.$$
Since $L_T$ is, by definition, conjugated to $\widetilde{\ci{J}}''_T(1)$, it is of the form $I-K(T)$, where $K(T)$ is an analytic operator-valued function of compact operators.

Now, by means of known results in bifurcation theory, we can prove Theorem \ref{Teorema2}.

\begin{proof}[Proof of Theorem \ref{Teorema2}]
 It suffices to apply Theorem 8.9 in \cite{MW}. In our case the hypotheses of that Theorem are all either trivial or standardly verifiable, with exception
of hypothesis $\gamma$, that is consequence of Corollary 8.3 in the same book.
\end{proof}

\end{document}